\newcommand{\X}{\mathfrak X}
\DeclareMathOperator{\Aut}{Aut}
\DeclareMathOperator{\Inn}{Inn}
\DeclareMathOperator{\m}{m}
\DeclareMathOperator{\sm}{sm}
\newtheorem{theorem}{Theorem}
\newtheorem{lemma}{Lemma}
\newtheorem{definition}{Definition}
\newtheorem{proposition}{Proposition}
\newtheorem{corollary}{Corollary}
\begin{document}

\title{The proper definition and Wielandt-Hartley's theorem for submaximal \( \X \)-subgroups
\let\thefootnote\relax\footnotetext{The first and third authors were supported by the program of
fundamental scientific researches of the Siberian Branch of Russian Academy of Science No. I.1.1, Project No. 0314-2019-0001.
The second author was supported by Russian Foundation for Basic Research, Project No. 18-31-20011.}\\
\medskip
\large To the 110th anniversary of Helmut Wielandt
}

\author{Danila~Revin \and Saveliy~Skresanov \and Andrey~Vasil'ev}

\date{}

\maketitle

\begin{abstract}
	A nonempty class \( \X \) of finite groups is called complete
	if it is closed under taking subgroups, homomorphic images and extensions.
	We deal with a classical problem of determining \( \X \)-maximal subgroups.
	We consider two definitions of submaximal \( \X \)-subgroups
	suggested by Wielandt and discuss which one
	better suits our task. We prove that these
	definitions are not equivalent yet Wielandt-Hartley's theorem
	holds true for either definition of \( \X \)-submaximality.
	We also give some applications of the strong version of Wielandt-Hartley's theorem.
	\medskip

	\noindent
	\textbf{Keywords:} finite nonsolvable group, complete class, maximal \( \X \)-subgroups,
	submaximal \( \X \)-subgroups, subnormal subgroups
	\medskip

	\noindent
	\textbf{Mathematics Subject Classification (2010)} 20E28, 20D20, 20D35
\end{abstract}

\section{Introduction}

In this article we focus on the relationship between two definitions of
submaximal \( \X \)-subgroups of a finite group\footnote{We consider finite
groups only, and from now on the term ``group'' means a ``finite group''.}
given by H.~Wielandt: the first one appeared
in his lectures \cite{Wie4} delivered at T\"ubingen in 1963--64, and the
second one was presented in his talk~\cite{Wie3} at the celebrated
Santa Cruz conference on finite groups in 1979. We will show that these definitions are not
equivalent, yet Wielandt-Hartley's theorem for submaximal \( \X \)-subgroups
is true for either definition of submaximality. In its strong version this theorem
was announced by Wielandt in \cite{Wie3}, but the proof was never
published. As a demonstration of possible applications of the strong Wielandt-Hartley's
theorem, we prove a sufficient condition for conjugacy of submaximal
\( \X \)-subgroups in terms of projections into the factors of subnormal
series, obtain a characterization
of submaximal \( \X \)-subgroups in direct products and also find a new
criterion for subnormality.
The last section of the paper contains several short historical remarks.

We begin with the context where the notion of a submaximal \( \X \)-subgroup
arises. It is well known that one of the central topics in group
theory is a study of subgroups of a given group. Apart from arbitrary
subgroups, one can be interested in some special types of subgroups defined by
their arithmetic or group-theoretic properties, or in other words, by belonging to the
corresponding class \( \X \) of groups (abelian, nilpotent, solvable,
\( p \)-groups for a prime \( p \), \( \pi \)-groups for a set of primes \( \pi \), etc.).
This task is exceptionally difficult to achieve in its general setting, and
it is universally accepted that attention can be restricted to maximal subgroups, i.e.\ subgroups
which are maximal by inclusion
\begin{itemize}
	\item among proper subgroups, if we are interested in all subgroups;
		these subgroups are simply called \textit{maximal},
	\item or among \( \X \)-subgroups, i.e.\ subgroups from a class \( \X \);
		in this case we are talking about \textit{maximal \( \X \)-subgroups} or
		\textit{\( \X \)-maximal subgroups}.
\end{itemize}

Following Wielandt \cite{Wie4,Wie3}, we consider maximal \( \X \)-subgroups
only in the case of a so-called complete class \( \X \). A nonempty class \( \X \) of finite groups is said to be \textit{complete}
(``\textit{vollst\"{a}ndig}'' in Wielandt's terms \cite[Definition~11.3]{Wie4}),
if it is closed under taking subgroups, homomorphic images and extensions,
where the latter means that \( G \in \X \) whenever \( G \) contains a normal \( \X \)-subgroup \( A \) and \(G/A \in \X \).
Solvable groups, \( \pi \)-groups, and solvable \( \pi \)-groups,
where \( \pi \) is a set of primes, are examples of complete classes.
From now on the symbol \( \X \) will always mean a fixed complete class.

While studying a group \( G \), it is natural to deal with the factors of its composition series,
i.e.\ a subnormal\footnote{Recall that the
relation of normality between a group and its subgroup is not transitive
in the sense that if \( H \) is normal in \( G \) (we write \( H \unlhd G \))
and \( K \) is normal in \( H \), then \( K \) is not necessarily normal in \( G \).
A subgroup \( H \) of \( G \) is \textit{subnormal}
(we write \( H {\unlhd\unlhd} G \)) if there exists a series
\( G = G_0 \geq G_1 \geq \dots \geq G_n = H \), where \( G_i \unlhd G_{i-1} \) for
all \( i = 1, \dots, n \). In other words, subnormality is the transitive
closure of normality.
} series
\begin{equation}
 G=G_0\geq G_1\geq \dots\geq G_n=1,\label{series}
\end{equation}
whose factors \( G^i = G_{i-1}/G_i \) are simple groups. Recall that the
classical Jordan-H\"older theorem implies that the set of composition factors
is a group invariant, meaning that up to reordering and isomorphism it does
not depend on the series~(\ref{series}). The strategy of ``reduction to
simple groups'', which appeared at the dawn of group theory in works of Galois and Jordan,
became truly effective after the classification of finite simple groups
(CFSG) had been finished.

Applying this approach to our problem, define for a subgroup \( H \) of a group \( G \) the
\textit{projections}
\[ H^i=(H\cap G_{i-1})G_i/G_i \]
of \( H \) on the factors \( G^i \) of a subnormal series~(\ref{series}).
Wielandt \cite[(12.1)(b)]{Wie4} noticed an obvious fact that if all projections
\( H^i \) of \( H \) are maximal \( \X \)-subgroups of \( G^i \),
then \( H \) is a maximal \( \X \)-subgroup of~\( G \).
It is a far less trivial question if the converse of this statement holds.

As a positive example, consider the class \( \X \) of all \( p \)-groups for
some prime~\( p \). Suppose that \( H \) is a maximal \( \X \)-subgroup
of~\( G \). By the Sylow theorem, \( H \) is a Sylow
\( p \)-subgroup of \( G \), i.e.\ its order is equal to the highest
power of \( p \) dividing the order of \( G \). It can be easily shown
that if \( A \) is a normal subgroup of \( G \), then \( H \cap A \)
is a Sylow \( p \)-subgroup of \( A \) and \( HA/A \) is a Sylow
\( p \)-subgroup of~\( G/A \). As a consequence, \( H^i \) is a Sylow
\( p \)-subgroup of \( G^i \) for each~\( i \).

Does something similar hold for an arbitrary complete class \( \X \)?
More precisely, if \( \X \) is a complete class, \( A \) is a normal
and \( H \) is an \( \X \)-maximal subgroups of \( G \), is it true that
\begin{itemize}
	\item[(a)] \( HA/A \) is a maximal \( \X \)-subgroup of \( G/A \), and
	\item[(b)] \( H \cap A \) is a maximal \( \X \)-subgroup of \( A \)?
\end{itemize}

Wielandt showed that the answers to Questions (a) and (b) in general are negative.

In \cite[(14.2)]{Wie4}, it was demonstrated that there is a generic counterexample to Question~(a)
for every complete class \( \X \) with the following property: there is a group with nonconjugate maximal
\( \X \)-subgroups. Indeed, let \( N \) be such a group, and \( B \) an arbitrary group.
If \( G = N \wr B \) is the regular wreath product of \( N \) and \( B \) with the base subgroup \( A = N^{|B|} \),
then each (maximal or not) \( \X \)-subgroup of \( B = G/A \) is an image of some maximal \( \X \)-subgroup of~\( G \).

There are examples where the intersection of a maximal \( \X \)-subgroup
with a normal subgroup \( A \) is not \( \X \)-maximal in \( A \) (cf. Question (b)).
Such examples can be found even among almost simple groups\footnote{
Recall that a finite group \( G \) is called \textit{almost simple}
if its \textit{socle}, that is the subgroup generated by
all (nontrivial) minimal normal subgroups, is a nonabelian simple group.}
(see, e.g., \cite[p.~27]{Wie4} and
\cite[Tables 6 and 11]{GR3}). However, in contrast to the situation with
homomorphic images, not every \( \X \)-subgroup of a normal subgroup
\( A \) can be represented as an intersection of \( A \) and some
maximal \( \X \)-subgroup. The relevant constraint here is the theorem
proved by Wielandt \cite[Hauptsatz~13.2]{Wie4} and B.~Hartley
\cite[Lemmas~2 and 3]{Hart} independently, so we further refer to this result
and its variations as Wielandt-Hartley's theorems.

We write \( \m_\X(G) \) for the set of all maximal \( \X \)-subgroups of \( G \).
Recall that if \( P \) and \(Q \) are subgroups of a group \( G \) then the \textit{normalizer}
\[ N_Q(P) = \{ x \in Q \mid x^{-1}Px = P \} \]
of \( P \) in \( Q \) is the largest subgroup of \( Q \) which normalizes \( P \).

\begin{theorem}[Wielandt-Hartley's theorem for normal subgroups]\label{WieHart}
	Let \( G \) be a finite group and let \( \X \) be a complete class.
	If \( A \) is a normal subgroup of \( G \), then for every
	\( H \in \m_\X(G) \) the quotient \( N_A(H \cap A)/(H \cap A) \)
	contains no nontrivial \( \X \)-subgroups.
\end{theorem}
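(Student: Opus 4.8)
The plan is to argue by contradiction, first normalising the configuration and then exhibiting a forbidden enlargement of $H$. Write $D = H \cap A$; since $A \unlhd G$ we have $D \unlhd H$, and as $\X$ is closed under subgroups, $D \in \X$. Suppose, for contradiction, that some nontrivial $\X$-subgroup $K/D$ of $N_A(D)/D$ exists. The whole point is that $K$ need not be normalised by $H$, so one cannot directly build an $\X$-overgroup of $H$; the first task is to reduce to a setting where one can.

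I would pass to $N_G(D)$. Since $A \unlhd G$, the subgroup $N_A(D) = A \cap N_G(D)$ is normal in $N_G(D)$, so $P := H\,N_A(D)$ is a subgroup, $D \unlhd P$, and $P \cap A = N_A(D)$ by Dedekind's identity. As $H \le P \le G$, the subgroup $H$ stays $\X$-maximal in $P$. Passing to $\bar P = P/D$ (using that, by completeness of $\X$, the $\X$-subgroups of $P$ containing $D$ correspond to the $\X$-subgroups of $\bar P$, with $\X$-maximality preserved) I obtain $\bar G = \bar P = \bar H\,\bar A$ with $\bar A = N_A(D)/D \unlhd \bar G$, $\bar H \cap \bar A = 1$, and $\bar H \in \X$ an $\X$-maximal subgroup. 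The claim becomes: \emph{when $\bar H$ is an $\X$-maximal complement to a normal subgroup $\bar A$, that $\bar A$ contains no nontrivial $\X$-subgroup.} The engine is now transparent: any nontrivial $\bar H$-invariant $\X$-subgroup $W \le \bar A$ gives a subgroup $\bar H W$ with $W \unlhd \bar H W$ and $\bar H W/W \cong \bar H \in \X$, hence $\bar H W \in \X$ and $\bar H W > \bar H$, contradicting maximality. So it suffices to produce such a $W$ whenever $\bar A$ has any nontrivial $\X$-subgroup.

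I would establish this by induction on $|G|$, taking a minimal normal subgroup $B$ of $\bar G$ inside $\bar A$. If $B \in \X$ then $B$ itself is the required $\bar H$-invariant subgroup. If $B$ carries no nontrivial $\X$-subgroup, then every $\X$-subgroup meets $B$ trivially and $\pi(B) \cap \pi(\X) = \varnothing$; Schur--Zassenhaus together with the Feit--Thompson theorem (to obtain conjugacy of complements, since one of the two coprime factors has odd order) yields a bijection between $\X$-subgroups of $\bar G$ and of $\bar G/B$ respecting $\X$-maximality and the complement structure, so the claim descends to $\bar G/B$ and induction applies. Since $\bar H B$ is a proper subgroup unless $B = \bar A$, applying induction inside $\bar H B$ reduces us finally to the case $\bar A = B = S_1 \times \dots \times S_k$, a nonabelian minimal normal subgroup whose simple factors $S_i$ lie outside $\X$ yet whose order is divisible by some prime of $\pi(\X)$.

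This last case is the main obstacle. Here $\bar H$ permutes $\{S_1,\dots,S_k\}$ transitively; writing $\bar H_1$ for the stabiliser of $S_1$, it is enough to find a nontrivial $\bar H_1$-invariant $\X$-subgroup $W_1$ of $S_1$, for spreading $W_1$ over an $\bar H$-transversal produces an $\bar H$-invariant $\X$-subgroup of $\bar A$ isomorphic to $W_1^{\,k}$. Thus everything comes down to a statement about a single nonabelian simple group: \emph{an $\X$-group acting on $S$ normalises a nontrivial $\X$-subgroup of $S$}, for example a suitable $q$-subgroup with $q \in \pi(S) \cap \pi(\X)$. I expect this to be the hard kernel of the argument: it cannot be handled by coprime-action theory alone, because the orders of $\bar H_1$ and $S$ need not be coprime, and it draws on structural information about the automorphisms and local subgroups of simple groups. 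Granting it, the induced $W$ contradicts the $\X$-maximality of $\bar H$, which closes the induction and proves the theorem.
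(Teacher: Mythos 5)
Your chain of reductions is sound as far as it goes: passing to \( P = H\,N_A(D) \) and then to \( \bar P = P/D \) is legitimate (since \( D \in \X \) and \( \X \) is extension-closed, \( \X \)-maximality transfers both down to \( P \) and across the quotient), the recast claim --- an \( \X \)-maximal complement \( \bar H \) to a normal subgroup \( \bar A \) forces \( \bar A \) to contain no nontrivial \( \X \)-subgroup --- is a faithful reformulation, the elimination of a \( \pi' \)-minimal normal subgroup \( B \) via Schur--Zassenhaus and Feit--Thompson is correct (this is exactly \cite[Proposition~4]{GR1} in the paper's apparatus), and the spreading of an \( \bar H_1 \)-invariant subgroup of \( S_1 \) over a transversal is standard. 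The genuine gap is exactly where you write ``Granting it'': the statement that an \( \X \)-group \( Y \) of automorphisms of a nonabelian simple group \( S \) with \( \pi(S) \cap \pi(\X) \ne \varnothing \) normalizes a nontrivial \( \X \)-subgroup of \( S \) is not an auxiliary lemma one may assume --- it is \emph{equivalent} to the theorem in the almost simple case. (From the theorem it follows by taking \( H \in \m_\X(SY) \) with \( Y \le H \) inside \( SY \le \Aut(S) \): if \( H \cap S = 1 \), the theorem applied to \( A = S \unlhd SY \) would make \( S \) a \( \pi(\X)' \)-group; conversely your entire induction funnels the theorem into this one claim.) So what you have actually proved is a reduction of Theorem~\ref{WieHart} to its almost simple case --- useful, but the easy half of every known proof; assuming the kernel begs the decisive question.

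Moreover, that kernel is precisely where, as the paper notes, \emph{all} known proofs invoke the Schreier conjecture (solvability of \( \operatorname{Out}(S) \), known only through CFSG): in the critical subcase \( Y \cap \Inn(S) = 1 \) one gets \( Y \) solvable and must still manufacture a \( Y \)-invariant nontrivial \( \X \)-subgroup of \( S \), and, as you yourself observe, coprime-action theory does not apply since \( |Y| \) and \( |S| \) need not be coprime. For comparison, the paper does not re-derive Theorem~\ref{WieHart} by your complement scheme at all: it proves the stronger subnormal version (Proposition~\ref{refined}) by a minimal-counterexample argument --- first \( O_\X(G) = O_{\pi'}(G) = 1 \) (Lemma~\ref{triv}), then the centralizer Lemma~\ref{centr} and factorization Lemma~\ref{factor}, then the reduction to \( G = \langle N_A(A \cap H), H \rangle \), forcing \( M \cap H \unlhd G \) for a minimal normal subgroup \( M \) --- and it outsources the hard kernel to \cite[Proposition~8]{GR1}, the Shemetkov-type theorem that a subnormal subgroup which is not a \( \pi' \)-group meets every \( \X \)-maximal subgroup nontrivially; the CFSG-dependence lives inside that citation. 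To complete your argument you would need either to prove your simple-group lemma (essentially redoing Wielandt and Hartley, with Schreier's conjecture as input) or to cite such a result explicitly; without one of these, the final and essential case stands asserted rather than proved.
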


All known proofs of Theorem~\ref{WieHart} and its special cases
(see Section~\ref{Remarks}) use the Schreier conjecture asserting solvability
of the outer automorphism group \(\operatorname{Out}(S) \) of every simple group \( S \),
i.e.\ of the quotient of the automorphism group \( \Aut(S) \) of \( S \) by the group \( \Inn(S) \)
of inner automorphisms. Recall that the validity of the Schreier conjecture follows
from CFSG.

Theorem~\ref{WieHart} prompted Wielandt to introduce a new concept:
submaximal \( \X \)-subgroups.

\begin{definition}\label{StrongSubmax}
	\normalfont
	\cite[Definition~15.1]{Wie4} Let \( \X \) be a complete class
	of finite groups. A subgroup \( H \) of a finite
	group \( G \) is called a \textit{(strongly) submaximal
	\( \X \)-subgroup} or
	\textit{\( \X \)-submaximal in the sense of}~\cite{Wie4}
	(we write \( H \in \sm^\circ_\X(G) \))
	if there exists an embedding
	\[ \phi : G \hookrightarrow G^* \]
	of a group \( G \) in some finite group \( G^* \) such that
	\[ G^\phi \unlhd G^* \text{ and }
	H^\phi = X \cap G^\phi \text{ for some } X \in \m_\X(G^*). \]
\end{definition}

Less formally, \( H \in \sm^\circ_\X(G) \) if there is a group
\( G^* \) and its subgroup \( X \in \m_\X(G^*) \) such that
\( G \unlhd G^* \) and \( H = G \cap X \).

Since we can take \( G^* = G \), it is clear that
\( \m_\X(G) \subseteq \sm^\circ_\X(G) \).

Theorem~\ref{WieHart} can now be reformulated in this new language as follows.

\begin{theorem}[Wielandt-Hartley's theorem for strongly submaximal
	\( \X \)-subgroups]\label{WieHartStrongSubmax}
	Let \( G \) be a finite group and let \( \X \) be a complete class.
	If \( H \in \sm^\circ_\X(G) \), then \( N_G(H)/H \) contains no nontrivial \( \X \)-subgroups.
\end{theorem}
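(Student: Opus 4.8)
The plan is to reduce Theorem~\ref{WieHartStrongSubmax} directly to the already-established Theorem~\ref{WieHart}. By Definition~\ref{StrongSubmax}, membership $H \in \sm^\circ_\X(G)$ supplies an embedding $\phi\colon G \hookrightarrow G^*$ with $G^\phi \unlhd G^*$ and $H^\phi = X \cap G^\phi$ for some $X \in \m_\X(G^*)$. Identifying $G$ with its image $G^\phi$, we may simply assume $G \unlhd G^*$, $X \in \m_\X(G^*)$, and $H = X \cap G$. The goal is to show $N_G(H)/H$ has no nontrivial $\X$-subgroup. The key observation is that $G$ plays the role of the normal subgroup "$A$" in Theorem~\ref{WieHart}, applied inside the ambient group $G^*$: since $G \unlhd G^*$ and $X \in \m_\X(G^*)$, Theorem~\ref{WieHart} (with $G^*$ in place of $G$ and $G$ in place of $A$) tells us that $N_G(X \cap G)/(X \cap G) = N_G(H)/H$ contains no nontrivial $\X$-subgroup.

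The main point requiring care is the \emph{matching of the normalizers}: Theorem~\ref{WieHart} produces the quotient $N_A(H \cap A)/(H \cap A)$, and I must check that with the substitution $A = G$, $H \cap A = X \cap G = H$, so that $N_A(H \cap A) = N_G(H)$ is exactly the normalizer appearing in the conclusion of Theorem~\ref{WieHartStrongSubmax}. This is a direct consequence of the hypotheses, so essentially no real work is needed here beyond unwinding definitions. I would verify that $H = X \cap G$ is indeed the relevant intersection, and that the normalizer $N_G(H)$ taken inside $G$ coincides with the normalizer $N_A(H \cap A)$ taken inside $A = G$ in the statement of Theorem~\ref{WieHart}.

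**The hard part will be** essentially nonexistent for this direction, since the whole content of Theorem~\ref{WieHartStrongSubmax} is repackaged into Theorem~\ref{WieHart}; the definition of strong $\X$-submaximality was crafted precisely so that this reduction goes through transparently. The only subtlety worth flagging is the necessity to first pass through the isomorphism $\phi$ and identify $G$ with $G^\phi \unlhd G^*$, so that the conjugation action defining $N_G(H)$ is the one internal to $G^*$; after this identification the argument is immediate. Thus the proof consists of (i) unwinding Definition~\ref{StrongSubmax} to obtain the embedding data, (ii) identifying $G$ with its normal image in $G^*$, and (iii) invoking Theorem~\ref{WieHart} with $G^*$ as the ambient group and $G$ as the normal subgroup. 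I would expect the entire proof to occupy no more than a few lines.
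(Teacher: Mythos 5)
Your proposal is correct and takes exactly the route the paper intends: the paper presents Theorem~\ref{WieHartStrongSubmax} as a direct reformulation of Theorem~\ref{WieHart} in the language of Definition~\ref{StrongSubmax}, obtained by applying Theorem~\ref{WieHart} inside the ambient group \( G^* \) with \( G \) playing the role of the normal subgroup \( A \), so that \( N_A(H\cap A)/(H\cap A) \) becomes \( N_G(H)/H \). Your unwinding of the embedding \( \phi \) and the matching of normalizers is precisely this identification, and nothing further is needed.
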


Fifteen years later, at the Santa Cruz conference \cite{Wie3},
Wielandt suggested a program for studying maximal \( \X \)-subgroups by projecting them into the
factors of a composition series. For that purpose, he came to a different, though close to original, definition of \( \X \)-submaximality.
He expected to find a generalization of a maximal \( \X \)-subgroup that would ``preserve as many
properties'' of Sylow \( p \)-subgroups and Hall \( \pi \)-subgroups
as possible, ``for example, compatibility with normal subgroups'' \cite[p.~170]{Wie3}.

\begin{definition}\label{Submax}
	\normalfont
	\cite[p.~170]{Wie3}
	Let \( \X \) be a complete class of finite groups. A subgroup
	\( H \) of \( G \) is called a
	\textit{submaximal \( \X \)-subgroup} or
	\textit{\( \X \)-submaximal in the sense of}~\cite{Wie3}
	(we write \( H \in \sm_\X(G) \))
	if there exists an embedding
	\[ \phi : G \hookrightarrow G^* \]
	of a group \( G \) in some finite group \( G^* \) such that
	\[ G^\phi {\unlhd\unlhd} G^* \text{ and }
	H^\phi = X \cap G^\phi \text{ for some } X \in \m_\X(G^*). \]
\end{definition}

Comparing Definitions~\ref{StrongSubmax} and~\ref{Submax}, one can see
that the only difference lies in the requirements on the embedding of \( G \)
into \( G^* \): in the first case \( G \) embeds as a normal subgroup,
while in the second case it embeds as a subnormal subgroup.
Since a normal subgroup is also subnormal,
\begin{equation}\label{inclusion}
	\m_\X(G) \subseteq \sm^\circ_\X(G) \subseteq \sm_\X(G).
\end{equation}

Definition~\ref{Submax} is \textit{a priori}
more general and intuitively more complicated than Definition~\ref{StrongSubmax}. But it fits
the goal of Wielandt's program better, because \( \X \)-submaximal (in the sense of \cite{Wie3}) subgroups
have the obvious inductive property
resembling properties of Sylow subgroups:
\begin{equation}\label{SubmaxProperty}
	\text{if } H \in \sm_\X(G) \text{ and }
	N{\unlhd\unlhd}G, \text{ then }
	H \cap N \in \sm_\X(N).
\end{equation}
Hence Definition~\ref{Submax} exactly satisfies the requirements that Wielandt
posed on the ``proper'' (``richtig'' \cite[p.~170]{Wie3}) generalization of maximal \( \X \)-subgroups.

If Definition~\ref{StrongSubmax} was equivalent to Definition~\ref{Submax}, it would also be ``richtig.''
However, it is not the case. In Section~\ref{Examples}, we provide a series of almost simple groups
\( G \) with socles isomorphic to the orthogonal groups \( \operatorname{P\Omega}_{4n}^+(q) \) such that
\( \sm^\circ_\X(G) \ne \sm_\X(G) \) for suitable classes~\( \X \).
Since Definitions~\ref{StrongSubmax} and~\ref{Submax} are not equivalent,
in what follows we refer to \( \X \)-subgroups from Definition~\ref{StrongSubmax}
as \textit{strongly} submaximal.

Now, it is natural to ask if submaximal \( \X \)-subgroups inherit main properties of strongly
submaximal \( \X \)-subgroups. In~\cite[5.4(a)]{Wie3}, Wielandt announced the following theorem.

\begin{theorem}[Wielandt-Hartley's theorem for submaximal \( \X \)-subgroups]\label{WieHartSubmax}
	Let \( G \) be a finite group and let \( \X \) be a complete class.
	If \( H \in \sm_\X(G) \), then \( N_G(H)/H \)
	contains no nontrivial \( \X \)-subgroups.
\end{theorem}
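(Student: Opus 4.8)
The plan is to induct on the \emph{subnormal defect} \( n \) of a witnessing embedding, that is, on the least length of a subnormal series \( G=G_n\unlhd G_{n-1}\unlhd\dots\unlhd G_0=G^* \) with \( X\in\m_\X(G^*) \) and \( H=G\cap X \). When \( n\le 1 \) the subgroup \( G \) is normal in \( G^* \), so \( H\in\sm^\circ_\X(G) \) and the conclusion is exactly Theorem~\ref{WieHartStrongSubmax}. For the inductive step I would isolate the bottom link of the series: put \( A=G_{n-1} \), so that \( G\unlhd A \), and set \( H_A=A\cap X \). Since \( A{\unlhd\unlhd}G^* \) with defect \( n-1 \) and \( X\in\m_\X(G^*) \), Definition~\ref{Submax} gives \( H_A\in\sm_\X(A) \), and the inductive hypothesis yields Theorem~\ref{WieHartSubmax} for the pair \( (A,H_A) \): the quotient \( N_A(H_A)/H_A \) has no nontrivial \( \X \)-subgroup. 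Moreover \( H=G\cap X=G\cap H_A \) because \( G\le A \), and \( H\unlhd H_A \) because \( G\unlhd A \). Thus everything reduces to a single normal step: given \( G\unlhd A \) and \( H_A\le A \) with \( N_A(H_A)/H_A \) free of nontrivial \( \X \)-subgroups, show that \( N_G(H)/H \) is free of nontrivial \( \X \)-subgroups, where \( H=G\cap H_A \).

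The first gain is that the inductive hypothesis says precisely that \( H_A\in\m_\X\bigl(N_A(H_A)\bigr) \): any \( \X \)-subgroup \( U \) with \( H_A\le U\le N_A(H_A) \) satisfies \( H_A\unlhd U \), whence \( U/H_A\le N_A(H_A)/H_A \) is an \( \X \)-subgroup and so \( U=H_A \). Now I would apply Theorem~\ref{WieHart} inside the group \( B=N_A(H_A) \) to its normal subgroup \( G\cap B=N_G(H_A) \) (normal because \( G\unlhd A \)) and to the maximal \( \X \)-subgroup \( H_A\in\m_\X(B) \). Since \( H_A\cap(G\cap B)=H_A\cap G=H \), this gives that \( N_{N_G(H_A)}(H)/H \) has no nontrivial \( \X \)-subgroup. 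Because every element of \( G \) normalizing \( H_A \) also normalizes \( G\cap H_A=H \), we have \( N_G(H_A)\le N_G(H) \) and hence \( N_{N_G(H_A)}(H)=N_G(H_A) \); so in fact \( N_G(H_A)/H \) has no nontrivial \( \X \)-subgroup. Thus the conclusion already holds for the \emph{smaller} normalizer \( N_G(H_A) \), and what remains is to pass from \( N_G(H_A) \) up to the full normalizer \( N_G(H) \).

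Equivalently, writing \( W=N_A(H) \), one has \( N_G(H)=G\cap W\unlhd W \) and \( H_A\cap N_G(H)=H \), so it would suffice to find \( Y\in\m_\X(W) \) with \( H_A\le Y \) and \( Y\cap G=H \); then Theorem~\ref{WieHart} applied to \( G\cap W\unlhd W \) and \( Y \), together with \( N_{G\cap W}(H)=N_G(H) \), finishes the proof. Some leverage is available: for any \( \X \)-subgroup \( U \) with \( H_A\le U\le W \) the inductive hypothesis forces \( N_U(H_A)=H_A \), for otherwise \( N_U(H_A)/H_A \) would be a nontrivial \( \X \)-subgroup of \( N_A(H_A)/H_A \). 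I expect the main obstacle to be exactly the control of the intersection with \( G \) when enlarging \( H_A \) to an \( \X \)-maximal subgroup of \( W \): a priori such an enlargement can acquire new \( \X \)-elements of \( N_G(H) \), and ruling this out is the concrete incarnation of the failure of the equality \( \sm^\circ_\X=\sm_\X \) underlying the strictness in~\eqref{inclusion}. Overcoming it is where the bare normalizer hypothesis on \( H_A \) is insufficient and the full strength of the submaximality of \( H_A \)—its witnessing embedding together with the Schreier property of the composition factors that underlies Theorem~\ref{WieHart}—must be brought to bear.
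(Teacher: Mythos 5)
Your argument is correct exactly up to the point where you yourself flag the difficulty, and that difficulty is the whole theorem: the proposal has a genuine gap, not a finishable sketch. The steps that do work --- base case via Theorem~\ref{WieHartStrongSubmax}, the observation that the inductive hypothesis makes \( H_A \) \( \X \)-maximal in \( B = N_A(H_A) \), and the application of Theorem~\ref{WieHart} to \( N_G(H_A) \unlhd B \) --- only control \( N_G(H_A)/H \), i.e.\ the part of the normalizer of \( H \) that also normalizes \( H_A \). Elements of \( N_G(H)\setminus N_G(H_A) \) are untouched, and your proposed repair (find \( Y \in \m_\X(N_A(H)) \) with \( H_A \le Y \) and \( Y \cap G = H \)) is circular: for any \( Y \in \m_\X(N_A(H)) \) containing \( H_A \), the intersection \( Y \cap G \) is an \( \X \)-subgroup of \( N_G(H) \) containing \( H \), so the condition \( Y \cap G = H \) is essentially equivalent to the conclusion you are trying to prove, and your ``leverage'' \( N_Y(H_A) = H_A \) does not exclude \( Y \cap G > H \), since elements of \( Y \cap G \) need not normalize \( H_A \). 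This is precisely the phenomenon the introduction warns about: passing from normal to subnormal by ``straightforward induction'' along the chain fails here, just as it does for Wielandt's theorem on joins of subnormal subgroups. Note also that the ``single normal step'' as you isolate it --- \( G \unlhd A \), with only the hypothesis that \( N_A(H_A)/H_A \) is \( \X \)-free --- retains strictly less information than \( H_A \in \sm_\X(A) \), so even as a statement it is too weak a reduction; you acknowledge this, but the proof then ends without supplying the missing input.

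For comparison, the paper does not induct on subnormal defect at all. It reformulates the theorem as Theorem~\ref{WieHartSubnorm} (equivalently Proposition~\ref{refined}, with \( \pi = \pi(\X) \)) and runs a global minimal-counterexample argument: in a minimal counterexample \( G \) with \( A {\unlhd\unlhd} G \) and \( H \in \m_\X(G) \), one first shows \( O_\X(G) = O_{\pi'}(G) = 1 \) (Lemma~\ref{triv}), then reduces to \( G = \langle N_A(A \cap H),\, H \rangle \), and analyzes a minimal normal subgroup \( M = S_1 \times \dots \times S_n \). Using the factorization \( M \cap H = (S_1 \cap H) \times \dots \times (S_n \cap H) \) (Lemma~\ref{factor}, resting on \cite[Proposition~10]{GR1}) and the fact that minimal subnormal subgroups outside \( A \) centralize \( A \) (Lemma~\ref{centr}), one shows both generators \( N_A(A \cap H) \) and \( H \) normalize \( M \cap H \), so \( M \cap H \unlhd G \), forcing \( M \cap H = 1 \); then \cite[Proposition~8]{GR1} (the \( \X \)-analogue of Shemetkov's theorem) forces \( M \) to be a \( \pi' \)-group, contradicting \( O_{\pi'}(G) = 1 \). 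The structural ingredients you correctly sense are needed --- Schreier-type control of composition factors and the full strength of maximality in the ambient group --- enter through these lemmas and the cited results, not through an enlargement of \( H_A \) inside \( N_A(H) \). To complete your outline you would need, at minimum, a substitute for that global analysis at the one normal step you isolated, and no such local argument is currently known.
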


As in the case of Theorems~\ref{WieHart} and~\ref{WieHartStrongSubmax}, this result can be reformulated without using the notion
of a submaximal \( \X \)-subgroup.

\begin{theorem}[Wielandt-Hartley's theorem for subnormal subgroups]\label{WieHartSubnorm}
	Let \( G \) be a finite group and let \( \X \) be a complete class.
	If \( A \) is a subnormal subgroup of \( G \), then for
	every \( H \in \m_\X(G) \) the quotient \( N_A(H \cap A)/(H \cap A) \)
	contains no nontrivial \( \X \)-subgroups.
\end{theorem}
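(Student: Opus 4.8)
The plan is to read Theorem~\ref{WieHartSubnorm} as a reformulation of Theorem~\ref{WieHartSubmax} and to deduce it directly from the latter, the bridge being the inductive property~\eqref{SubmaxProperty} of submaximal subgroups. Assume $A {\unlhd\unlhd} G$ and $H \in \m_\X(G)$. First I would pass from maximality to submaximality: by the chain~\eqref{inclusion} we have $H \in \sm_\X(G)$. Since $A$ is subnormal in $G$, property~\eqref{SubmaxProperty} applied with $N = A$ gives at once that $H \cap A \in \sm_\X(A)$. Now I change the ambient group from $G$ to $A$: as $H \cap A$ is a submaximal $\X$-subgroup of $A$, Theorem~\ref{WieHartSubmax}, applied to the group $A$ and to its submaximal subgroup $H \cap A$, asserts precisely that $N_A(H \cap A)/(H \cap A)$ contains no nontrivial $\X$-subgroup. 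This is the conclusion sought, so the proof is complete.

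The two facts invoked are genuinely at hand. The inclusion $H \in \sm_\X(G)$ is the leftmost part of~\eqref{inclusion}, obtained by taking $G^* = G$ in Definition~\ref{Submax}. Property~\eqref{SubmaxProperty} is the ``obvious'' inductive property: unwinding Definition~\ref{Submax}, if $\phi\colon G \hookrightarrow G^*$ witnesses $H \in \sm_\X(G)$, say $H^\phi = X \cap G^\phi$ with $X \in \m_\X(G^*)$ and $G^\phi {\unlhd\unlhd} G^*$, and if $N {\unlhd\unlhd} G$, then $N^\phi {\unlhd\unlhd} G^\phi {\unlhd\unlhd} G^*$ forces $N^\phi {\unlhd\unlhd} G^*$ by transitivity of subnormality, while $(H \cap N)^\phi = X \cap N^\phi$; hence the same embedding witnesses $H \cap N \in \sm_\X(N)$. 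The same bookkeeping run in the opposite direction recovers Theorem~\ref{WieHartSubmax} from Theorem~\ref{WieHartSubnorm}: given $H \in \sm_\X(G)$ with witness $\phi$, the subgroup $G^\phi$ is subnormal in $G^*$ and $X \in \m_\X(G^*)$, so applying Theorem~\ref{WieHartSubnorm} to $A := G^\phi {\unlhd\unlhd} G^*$ yields that $N_{G^\phi}(H^\phi)/H^\phi$, and hence the isomorphic group $N_G(H)/H$, has no nontrivial $\X$-subgroup. Thus the two statements are equivalent, and it suffices to prove either one.

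In this reading there is no residual difficulty in Theorem~\ref{WieHartSubnorm} itself: all of its content is absorbed into Theorem~\ref{WieHartSubmax} together with the formal property~\eqref{SubmaxProperty}, so the main obstacle is displaced to the proof of Theorem~\ref{WieHartSubmax}. It is instructive to see why one cannot sidestep submaximality by arguing directly from the normal case (Theorem~\ref{WieHart}) via induction on the subnormal defect of $A$ in $G$. In the inductive step one would take a normal subgroup $M \unlhd G$ with $A {\unlhd\unlhd} M$ of smaller defect and hope to apply the inductive hypothesis inside $M$; this requires a \emph{maximal} $\X$-subgroup of $M$ cutting out $H \cap A$ on $A$, but $H \cap M$ need \emph{not} be $\X$-maximal in $M$ --- this is exactly the failure recorded as Question~(b). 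What survives is only that $H \cap M$ is (strongly) submaximal in $M$, so the induction closes precisely when the statement is carried for submaximal, rather than maximal, subgroups. In other words, the naive induction forces one back to Theorem~\ref{WieHartSubmax}, which is why that abstract form is the natural object to establish first.
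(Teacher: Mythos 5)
Your argument establishes, correctly, the \emph{equivalence} of Theorem~\ref{WieHartSubnorm} and Theorem~\ref{WieHartSubmax} --- the forward direction via \( \m_\X(G) \subseteq \sm_\X(G) \) and property~(\ref{SubmaxProperty}), the reverse by applying the subnormal statement to \( A = G^\phi \,{\unlhd\unlhd}\, G^* \) inside the witnessing overgroup. But this is not a proof of either theorem; within this paper it is circular. Theorem~\ref{WieHartSubmax} is precisely the statement Wielandt announced in 1979 without proof, and the stated purpose of Section~\ref{Proof} is to supply the missing proof of both theorems at once: the paper observes (in the two sentences preceding Proposition~\ref{refined}) that they are equivalent to Proposition~\ref{refined}, and then proves \emph{that}. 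There is no independent source for Theorem~\ref{WieHartSubmax} you could invoke, so your reduction merely reproduces the paper's easy bookkeeping while omitting all of its content. Your own closing paragraph concedes this when it says the obstacle is ``displaced'' to Theorem~\ref{WieHartSubmax}; a proof must discharge the obstacle, not relocate it.

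The missing substance is the minimal-counterexample argument for Proposition~\ref{refined}: restating the conclusion as ``\( N_A(H \cap A)/(H \cap A) \) is a \( \pi' \)-group for \( \pi = \pi(\X) \),'' the paper takes \( G \) minimal, proves \( O_\X(G) = 1 \) and \( O_{\pi'}(G) = 1 \) by index computations after passing to \( G/K \) (Lemma~\ref{triv}), shows that a minimal subnormal subgroup \( S \nsubseteq A \) centralizes \( A \) (Lemma~\ref{centr}), and factorizes \( M \cap H \) along the simple factors of a subnormal \( M = S_1 \times \dots \times S_n \) using \cite[Proposition~10]{GR1} (Lemma~\ref{factor}); it then reduces to the case \( G = \langle N_A(A \cap H), H \rangle \), shows for a minimal normal subgroup \( M \) that \( M \cap H \) is normalized by both generators and hence trivial by Lemma~\ref{triv}, and concludes from \cite[Proposition~8]{GR1} that \( M \) is a \( \pi' \)-group, forcing \( M = 1 \) and \( G \) simple, contrary to \( 1 < A < G \). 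These ingredients --- in particular the Guo--Revin results, which ultimately rest on the Schreier conjecture and hence on CFSG --- cannot be generated by formal manipulation of Definition~\ref{Submax}. Your diagnosis of why a naive induction on subnormal defect through Theorem~\ref{WieHart} fails (namely, \( H \cap M \) need not be \( \X \)-maximal in \( M \), which is Question~(b)) is accurate and is indeed why the direct argument of Section~\ref{Proof} is needed; but identifying the obstruction is not the same as overcoming it, and as submitted the proposal contains no proof of the theorem.
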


As is easily seen, the only difference of the latter assertion from Theorem~\ref{WieHart} is that
\( A \) is a subnormal (not necessarily normal) subgroup of \( G \).

In group theory properties of subnormal subgroups can be often extracted
from the corresponding properties of normal subgroups by means of straightforward
induction. However, some statements are indeed harder to prove for subnormal subgroups. That is the case for
the classical theorem of Wielandt \cite[Statements~7 and 9]{Wie0}, \cite[Ch. 2, (3.23)]{Suz1}
which asserts that in a finite group a subgroup generated by subnormal
subgroups is also subnormal. The same difficulty arises in the case of Theorem~\ref{WieHartSubnorm}.

As far as we know, a proof of Theorem~\ref{WieHartSubnorm} (and Theorem~\ref{WieHartSubmax}) never appeared.
In Section~\ref{Proof}, we fill a gap by proving this theorem.

It is worth mentioning that L.~A.~Shemetkov \cite[Theorem~7]{Shem} proved an important special
case of Theorem~\ref{WieHartSubnorm}. Namely, he showed that \textit{if
\( H \) is a maximal \( \pi \)-subgroup of a finite group \( G \), and \( A \)
is a subnormal subgroup of \( G \) which is not a \( \pi' \)-group, then \( H \cap A \neq 1 \)}.
In \cite[Proposition~8]{GR1}, W.~Guo and D.~Revin generalized this result to
an arbitrary complete class \( \X \). We use the latter in the proof of Theorem~\ref{WieHartSubnorm}.

We establish several applications of Wielandt-Hartley's theorem in this strong version.
To begin with, Theorem~\ref{WieHartSubmax} and inductive property~(\ref{SubmaxProperty})
allow us to prove the sufficient condition for conjugacy of submaximal \( \X \)-subgroups,
which, as Theorem~\ref{WieHartSubmax}, was announced in~\cite{Wie3}. 
Recall that if a group \( G \) with a subnormal series~(\ref{series}) contains a subgroup \( H \), then the projection of \( H \)
on \( G^i = G_{i-1}/G_i \), \(i=1,\dots,n\), is denoted by~\( H^i \).

\begin{corollary}\label{SubmaxProjections}
	Suppose that a group \( G \) possesses a subnormal
	series~{\rm(\ref{series})} and \( H, K \in \sm_\X(G) \) satisfy
	\[ H^i = K^i \text{ for all } i = 1, \dots, n. \]
	Then \( H \) and \( K \) are conjugate in the subgroup
	\( \langle H, K \rangle \).
\end{corollary}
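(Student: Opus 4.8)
\emph{The plan is to induct on the length \( n \) of the series \eqref{series}}, the base case \( n=1 \) being immediate: then \( G^1=G \), so \( H=H^1=K^1=K \). For the inductive step I set \( N=G_1\unlhd G \); it carries the induced subnormal series \( N=G_1\geq\dots\geq G_n=1 \) of length \( n-1 \). By the inductive property \eqref{SubmaxProperty} the intersections \( H\cap N \) and \( K\cap N \) lie in \( \sm_\X(N) \), and a direct computation (using \( G_{i-1}\leq N \) for \( i\geq 2 \)) shows that their projections on the factors \( G^2,\dots,G^n \) coincide with \( H^2=K^2,\dots,H^n=K^n \). Applying the induction hypothesis inside \( N \) yields an element \( x\in\langle H\cap N,K\cap N\rangle\leq\langle H,K\rangle \) conjugating \( H\cap N \) to \( K\cap N \). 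Replacing \( H \) by \( H^x \) I may assume \( H\cap N=K\cap N=:D \); this keeps \( H \) submaximal (submaximality is invariant under inner automorphisms), preserves the top projection \( H^1=HN/N \) since \( x\in N\leq HN \), and leaves the whole argument inside \( \langle H,K\rangle \).

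After this reduction the equality \( H^1=K^1 \) reads \( HN=KN=:L \), with \( N\unlhd L \) and \( M:=\langle H,K\rangle\leq L \), while \( D=H\cap N=K\cap N \) is normalized by both \( H \) and \( K \), so \( D\unlhd M \). Passing to \( \overline M=M/D \) turns \( \overline H=H/D \) and \( \overline K=K/D \) into genuine complements to the normal subgroup \( \overline V=(M\cap N)/D \) of \( \overline M \): indeed \( H(M\cap N)=M \) and \( H\cap(M\cap N)=D \) by Dedekind's law, and the same for \( K \). Both complements are \( \X \)-groups, being quotients of \( H,K\in\X \), and Theorem~\ref{WieHartSubmax} descends cleanly, since \( N_{\overline M}(\overline H)/\overline H\cong N_M(H)/H\leq N_G(H)/H \) contains no nontrivial \( \X \)-subgroup, and likewise for \( \overline K \). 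Everything thus reduces to the following core assertion: \emph{two \( \X \)-complements \( \overline H,\overline K \) to a normal subgroup \( \overline V \) of a group \( \overline M=\langle\overline H,\overline K\rangle \), each satisfying the Wielandt--Hartley normalizer condition, are conjugate in \( \overline M \)}. Any such conjugacy lifts back to \( M \), because the projection \( M\to M/D \) carries \( \langle H,K\rangle \) onto \( \langle\overline H,\overline K\rangle \) and \( D\leq H\cap K \).

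I would prove the core assertion by induction on \( |\overline V| \), choosing a minimal normal subgroup \( \overline R\leq\overline V \) of \( \overline M \). The decisive use of the hypothesis is that whenever \( \overline R\in\X \), the fixed-point group \( C_{\overline R}(\overline H) \) is an \( \X \)-subgroup of \( N_{\overline V}(\overline H) \) and is therefore trivial, so \( \overline H \) acts fixed-point-freely on \( \overline R \); the complements to \( \overline R \) in \( \overline H\,\overline R \) are then conjugate (via Schur--Zassenhaus when the action is coprime, one factor being solvable by Feit--Thompson). Conjugating so that \( \overline H \) and \( \overline K \) acquire the same image modulo \( \overline R \) reduces \( |\overline V| \), and a Frattini argument shows that the normalizer condition survives in \( \overline M/\overline R \), allowing the induction to proceed. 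The conjugators are kept inside \( \langle\overline H,\overline K\rangle \) by first replacing the ambient group with this join, where the complements persist and the normalizer condition is inherited.

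\emph{The main obstacle is exactly this core assertion.} Two points demand care. First, the persistence of the Wielandt--Hartley normalizer condition under the quotient \( \overline M\to\overline M/\overline R \) is not automatic; I expect to secure it through the Frattini argument above, but only once conjugacy of the complements to \( \overline R \) is already in hand, so the two steps must be interleaved correctly. Second, the case of a nonabelian minimal normal subgroup \( \overline R\notin\X \) lies outside the reach of both the coprime Schur--Zassenhaus argument and the fixed-point computation, and forcing the complements into a single class there must again rely on Wielandt--Hartley, hence ultimately on the Schreier property underlying it.
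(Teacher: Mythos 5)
Your first two paragraphs reproduce the paper's opening moves exactly: induction on \( n \), property~(\ref{SubmaxProperty}) together with the induction hypothesis to arrange \( H\cap G_1=K\cap G_1=D \) by a conjugation inside \( \langle H,K\rangle \), the observation \( HG_1=KG_1 \), and \( D\unlhd M=\langle H,K\rangle \). The genuine gap is the ``core assertion'' to which you then reduce. For an \emph{arbitrary} normal subgroup \( \overline{V} \) you cannot prove it, and you concede this yourself for a nonabelian minimal normal subgroup \( \overline{R}\notin\X \). Moreover, even the branch you do sketch is unsound: when \( \overline{R}\in\X \), both \( \overline{R} \) and \( \overline{H} \) are \( \X \)-groups and may well share primes (take \( \X \) to be the class of solvable groups, say), so the action of \( \overline{H} \) on \( \overline{R} \) need not be coprime, Schur--Zassenhaus conjugacy is unavailable, and fixed-point-freeness of the action on an abelian \( \overline{R} \) does not make all complements to \( \overline{R} \) in \( \overline{H}\,\overline{R} \) conjugate --- the relevant first cohomology group need not vanish in the noncoprime case. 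So the proof is incomplete at its decisive step.

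The idea you are missing is a \emph{second} application of Theorem~\ref{WieHartSubmax}: apply it to \( D=H\cap G_1\in\sm_\X(G_1) \) (which property~(\ref{SubmaxProperty}) hands you), not only to \( H,K\in\sm_\X(G) \). It gives that \( N_{G_1}(D)/D \) contains no nontrivial \( \X \)-subgroups. Since both generators of \( M \) normalize \( D \), we have \( M\le N_{G^*}(D) \) for \( G^*=HG_1=KG_1 \), hence \( M\cap G_1\le N_{G_1}(D) \) and your \( \overline{V}=(M\cap G_1)/D \) embeds in \( N_{G_1}(D)/D \): it is \( \X \)-free. This collapses your core assertion to a classical statement. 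Indeed, \( \overline{M} \) has the normal series \( \overline{M}\ge\overline{V}\ge 1 \), where \( \overline{M}/\overline{V} \) is a quotient of \( H \) and so lies in \( \X \), while \( \overline{V} \) has no nontrivial \( \X \)-subgroups; thus \( \overline{M} \) is \( \X \)-separable. Also \( \overline{H},\overline{K}\in\m_\X(\overline{M}) \), since \( \overline{H}\le\overline{Y} \) with \( \overline{Y}\in\X \) forces, by Dedekind's lemma, \( \overline{Y}=\overline{H}(\overline{Y}\cap\overline{V})=\overline{H} \). Now Lemma~\ref{Chunikhin} yields conjugacy of \( \overline{H} \) and \( \overline{K} \) in \( \overline{M} \) at once, and your lifting argument finishes. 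This is exactly the paper's proof, phrased without passing to the quotient: it works inside \( N_{G^*}(D) \), which has the series \( N_{G^*}(D)\ge N_{G_1}(D)\ge D\ge 1 \) with every factor in \( \X \) or \( \X \)-free, verifies via projections that \( H,K\in\m_\X(N_{G^*}(D)) \), and applies Lemma~\ref{Chunikhin} in \( J=\langle H,K\rangle \). No induction on \( |\overline{V}| \), minimal normal subgroups, Schur--Zassenhaus, or Feit--Thompson is needed.
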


Since \( \m_\X(G) \subseteq \sm_\X(G) \), the same assertion holds for
\( \X \)-maximal subgroups.

In~\cite[pp.~30--31]{GR1}, it was noticed that Theorem~\ref{WieHartSubmax} implies a
characterization of submaximal \( \X \)-subgroups in direct products:

\begin{corollary}\label{DirectProd}
	Let \( G = G_1 \times \dots \times G_n \) be a direct product of its subgroups \( G_i \), \(i=1,\dots,n\). Then for every complete
	class \( \X \),
	\[ \sm_\X(G) = \{ \langle H_1, \dots, H_n \rangle \mid H_i \in
	\sm_\X(G_i), i = 1, \dots, n \}. \]
\end{corollary}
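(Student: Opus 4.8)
The plan is to prove the two inclusions separately, handling general $n$ directly without induction, and to first isolate one auxiliary fact about $\X$-maximal subgroups that feeds the easy inclusion. Throughout I would use that a submaximal $\X$-subgroup is itself an $\X$-group: if $H \in \sm_\X(G)$ then $H \cong X \cap G^\phi \le X \in \X$, so $H \in \X$ because $\X$ is closed under subgroups; consequently each projection $\pi_i(H)$ into $G_i$ lies in $\X$, being a homomorphic image of $H$. I would then record the maximal analogue: for a direct product $G^* = G_1^* \times \dots \times G_n^*$ one has $X_1 \times \dots \times X_n \in \m_\X(G^*)$ whenever $X_i \in \m_\X(G_i^*)$. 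This is immediate from a projection argument: an $\X$-overgroup $Y$ of $\prod X_i$ satisfies $\pi_i(Y) \ge X_i$ with $\pi_i(Y) \in \X$, so $\pi_i(Y) = X_i$ by maximality, whence $Y \le \prod \pi_i(Y) = \prod X_i$.

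For the inclusion $\supseteq$, suppose $H_i \in \sm_\X(G_i)$ and pick witnesses $\phi_i : G_i \hookrightarrow G_i^*$ with $G_i \unlhd\unlhd G_i^*$ and $H_i = X_i \cap G_i$ for some $X_i \in \m_\X(G_i^*)$. I would set $G^* = G_1^* \times \dots \times G_n^*$ and $X = X_1 \times \dots \times X_n$. Then $G = \prod G_i$ is subnormal in $G^*$ (a subnormal series for each factor $G_i \unlhd\unlhd G_i^*$ thickens by the remaining direct factors, and the resulting series compose), $X \in \m_\X(G^*)$ by the maximal analogue above, and $X \cap G = \prod (X_i \cap G_i) = \prod H_i = \langle H_1, \dots, H_n \rangle$. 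Hence $\langle H_1, \dots, H_n \rangle \in \sm_\X(G)$ directly from Definition~\ref{Submax}.

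For the inclusion $\subseteq$, take $H \in \sm_\X(G)$ and put $H_i = H \cap G_i$. Since each direct factor $G_i$ is normal, hence subnormal, in $G$, the inductive property~(\ref{SubmaxProperty}) gives $H_i \in \sm_\X(G_i)$. The heart of the proof is to show $\pi_i(H) = H_i$, which forces $H \le \pi_1(H) \times \dots \times \pi_n(H) = H_1 \times \dots \times H_n \le H$ and hence $H = \langle H_1, \dots, H_n \rangle$. To get $\pi_i(H) = H_i$ I would first verify the Goursat-type fact $H_i \unlhd \pi_i(H)$: conjugating an element $(1,\dots,h_i,\dots,1) \in H$ with $h_i \in H_i$ by an arbitrary $(g_1,\dots,g_n) \in H$ keeps all coordinates except the $i$-th trivial and sends $h_i$ to $g_i h_i g_i^{-1}$, so the conjugate again lies in $H \cap G_i = H_i$. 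Thus $\pi_i(H)$ normalizes $H_i$, i.e.\ $\pi_i(H)/H_i$ embeds in $N_{G_i}(H_i)/H_i$; since $\pi_i(H)/H_i \in \X$ (a quotient of the $\X$-group $\pi_i(H)$) while $N_{G_i}(H_i)/H_i$ has no nontrivial $\X$-subgroup by Theorem~\ref{WieHartSubmax} applied in $G_i$, I conclude $\pi_i(H) = H_i$.

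The main obstacle is precisely this last inclusion: unlike the maximal case, $H$ enjoys no maximality in $G$ itself to prevent it from being a genuine subdirect (non-split) subgroup of $G_1 \times \dots \times G_n$, so the decomposition $H = \prod H_i$ cannot be read off from the definitions alone. The mechanism that rescues it is the pairing of the elementary normality $H_i \unlhd \pi_i(H)$ with the strong Wielandt-Hartley theorem (Theorem~\ref{WieHartSubmax}), which annihilates the Goursat quotient $\pi_i(H)/H_i$; this is exactly the point where having the theorem available for the weaker notion of submaximality, and not merely for strong submaximality, is essential.
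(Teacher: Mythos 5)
Your proof is correct, and its forward inclusion is essentially identical to the paper's: both apply property~(\ref{SubmaxProperty}) to get \( H \cap G_i \in \sm_\X(G_i) \), observe the Goursat-type normality \( H \cap G_i \unlhd H^{\rho_i} \), and then kill the \( \X \)-quotient \( H^{\rho_i}/(H \cap G_i) \) inside \( N_{G_i}(H \cap G_i)/(H \cap G_i) \) via Theorem~\ref{WieHartSubmax}. The genuine divergence is in the converse inclusion. The paper, after forming the same witnesses \( G^* = G_1^* \times \dots \times G_n^* \) and \( K = K_1 \times \dots \times K_n \in \m_\X(G^*) \), invokes Theorem~\ref{WieHartSubmax} a \emph{second} time: it notes \( H_i \unlhd\unlhd K \cap G \), hence \( H_i \unlhd\unlhd (K \cap G)^{\rho_i} \), and deduces \( H_i = (K\cap G)^{\rho_i} \) from the self-normalization forced by the theorem, sandwiching \( K \cap G \) between \( \langle H_1,\dots,H_n\rangle \) and \( \langle (K\cap G)^{\rho_1},\dots,(K\cap G)^{\rho_n}\rangle \). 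You instead compute \( X \cap G = (X_1 \cap G_1) \times \dots \times (X_n \cap G_n) \) directly, which is valid because both \( X \) and the embedded copy of \( G \) are coordinate-aligned subgroups of \( G^* \) (an element \( (x_1,\dots,x_n) \in X \) lies in \( G \) iff each \( x_i \in G_i \)); this makes your converse purely definitional and shorter, dispensing with the second appeal to Wielandt-Hartley entirely. You also prove inline the fact that \( X_1 \times \dots \times X_n \in \m_\X(G^*) \), which the paper asserts as ``easy to see'' (it is essentially \cite[Proposition~10]{GR1}); just note for completeness that your projection argument should be accompanied by the (trivial, extension-closure) observation that \( X_1 \times \dots \times X_n \in \X \), so that it is a candidate member of \( \m_\X(G^*) \) at all.
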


This corollary, helpful in inductive arguments, is an analogue of the well-known property of maximal
\( \X \)-subgroups \cite[Proposition~10]{GR1}.

In his talk in 1979, Wielandt posed a problem of reversing Theorem~\ref{WieHartSubnorm} for
classes of \( \pi \)-groups \cite[p.~171, Problem~(i)]{Wie3}: Must a subgroup \( A \) be subnormal
in \( G \) if the order of \( N_A(H \cap A)/(H \cap A) \) is not divisible by
any number in \( \pi \) for all sets of primes \( \pi \) and all maximal
\( \pi \)-subgroups \( H \) of~\( G \)? In 1991, P.~Kleidman obtained a positive
answer with the help of CFSG \cite{Kleid}. Combining Theorem~\ref{WieHartSubnorm} with Kleidman's result,
we come to the following criterion of subnormality.
\begin{corollary}\label{Subnormality}
	A subgroup \( A \) of a group \( G \) is subnormal if and only if for every
	complete class \( \X \) and every maximal \( \X \)-subgroup \( H \)
	in \( G \), the quotient \( N_A(H \cap A)/(H \cap A) \) contains no nontrivial \( \X \)-subgroups.
\end{corollary}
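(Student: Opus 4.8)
The plan is to split the biconditional and observe that one direction is exactly Theorem~\ref{WieHartSubnorm}, while the other is exactly Kleidman's theorem; the only real work is to connect the ``for every complete class \( \X \)'' formulation with the ``\( \pi \)-group'' formulation in which Kleidman's result is stated.

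For the necessity (``only if''), suppose \( A {\unlhd\unlhd} G \). Fix any complete class \( \X \) and any \( H \in \m_\X(G) \). Then Theorem~\ref{WieHartSubnorm} applies directly and guarantees that \( N_A(H \cap A)/(H \cap A) \) contains no nontrivial \( \X \)-subgroups, which is precisely the asserted condition. So this direction requires nothing beyond quoting the theorem.

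For the sufficiency (``if''), I would specialize the hypothesis to the complete classes of \( \pi \)-groups. For a fixed set of primes \( \pi \), the class \( \X_\pi \) of all \( \pi \)-groups is complete, its \( \X_\pi \)-maximal subgroups are exactly the maximal \( \pi \)-subgroups of \( G \), and a finite group contains no nontrivial \( \pi \)-subgroup if and only if its order is not divisible by any prime in \( \pi \) (the forward implication by Cauchy's theorem, the converse being immediate). Reading the hypothesis off for each class \( \X_\pi \) therefore yields: for every set of primes \( \pi \) and every maximal \( \pi \)-subgroup \( H \) of \( G \), the order of \( N_A(H \cap A)/(H \cap A) \) is not divisible by any prime in \( \pi \). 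This is verbatim the hypothesis of the problem Wielandt posed in \cite[p.~171, Problem~(i)]{Wie3}, so Kleidman's theorem \cite{Kleid} gives that \( A \) is subnormal in \( G \).

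I do not expect a genuine obstacle here, since the two implications are carried entirely by Theorem~\ref{WieHartSubnorm} and by \cite{Kleid}. The single point I would take care to state cleanly is the elementary equivalence between ``contains no nontrivial \( \X_\pi \)-subgroup'' and ``order coprime to all primes in \( \pi \),'' as this is what lets the universally quantified hypothesis over all complete classes collapse onto the \( \pi \)-group hypothesis that Kleidman requires.
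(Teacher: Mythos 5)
Your decomposition coincides with the paper's: necessity is Theorem~\ref{WieHartSubnorm} quoted verbatim, and sufficiency is delegated to Kleidman, so the overall route is the same. The one place where your argument is thinner than the paper's is the hand-off to \cite{Kleid}. In its actual proof (Proposition~\ref{Subnormality1}) the paper does not cite Kleidman as having answered Wielandt's \( \pi \)-problem directly; it inserts an intermediate condition (iii) --- that \( H \cap A \) is a Sylow \( p \)-subgroup of \( A \) for all primes \( p \) and all Sylow \( p \)-subgroups \( H \) of \( G \) --- and cites Kleidman only for (iii)\( \Rightarrow \)(i), which is the literal statement of the Kegel--Wielandt conjecture proved in his paper. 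Your specialization to the classes \( \X_\pi \) yields, for \( \pi = \{p\} \), only that \( H \cap A \) is a Sylow \( p \)-subgroup of \( N_A(H \cap A) \), not of \( A \); bridging that difference requires the standard consequence of the Sylow theorem that a \( p \)-subgroup which is Sylow in its own normalizer is Sylow in the ambient group (\cite[Ch.~2, (2.5)]{Suz1}), and this is exactly the content of the paper's step (ii)\( \Rightarrow \)(iii). Since the introduction of the paper itself describes Kleidman's result as a positive answer to Wielandt's Problem~(i) of \cite{Wie3}, your citation is defensible at that level of granularity, but to make the proof self-contained you should state the normalizer-to-ambient-group Sylow step explicitly rather than burying it in the attribution; note also that only the singleton sets \( \pi = \{p\} \) are needed for this direction, not all sets of primes.
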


To sum up, this article contributes to Wielandt's program of
studying maximal \( \X \)-subgroups, which Wielandt himself deemed to be a development
of the H\"older program. We would also like to mention some recent progress in this direction
made in \cite{GR1}, \cite{GR3}, and \cite{GRsurv}, where, in particular, the authors
suggested an inductive algorithm of finding maximal \( \X \)-subgroups in a finite group
provided all submaximal \( \X \)-subgroups in finite simple groups are known for a given class~\( \X \).

\section{Examples of submaximal but not strongly submaximal \( \X \)-subgroups}\label{Examples}

As mentioned in the introduction, we will find desired examples among
almost simple groups. Recall that a group \( G \) is \textit{almost simple} with socle
\( S \), if
\[ S \le G \le \Aut(S), \]
where a nonabelian simple group \( S \) is identified with the group \( \Inn(S) \) of its inner automorphisms.

The following lemma refines the definition of strong submaximality
for almost simple groups.

\begin{lemma}\label{AlmostSimpleSrongSubmax}
	Let \( G \) be an almost simple group with socle \( S \notin \X \).
	Then \( H \in \sm^\circ_\X(G) \) if and only if there exists a group
	\( G^* \) such that \( G \unlhd G^* \le \Aut(S) \) and
	\( H = K \cap G \) for some \( K \in \m_\X(G^*) \).
\end{lemma}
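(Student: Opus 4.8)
The backward direction is immediate: if $G \unlhd G^* \le \Aut(S)$ and $H = K \cap G$ for some $K \in \m_\X(G^*)$, then taking $\phi$ to be the inclusion $G \hookrightarrow G^*$ witnesses $H \in \sm^\circ_\X(G)$ directly from Definition~\ref{StrongSubmax}. So the content lies in the forward direction, for which the plan is as follows. Suppose $H \in \sm^\circ_\X(G)$ and fix a witnessing overgroup: identifying $G$ with $G^\phi$, we have $G \unlhd G^*$ and $H = X \cap G$ for some $X \in \m_\X(G^*)$. The first move is to \emph{shrink} $G^*$. Since $G \unlhd G^*$, the set $M := XG$ is a subgroup with $X \le M \le G^*$ and $G \unlhd M$; as $\X$-maximality is inherited by intermediate subgroups, $X \in \m_\X(M)$, and clearly $H = X \cap G$ is unchanged. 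Crucially, $M/G \cong X/(X \cap G) = X/H$ is a homomorphic image of the $\X$-group $X$, so $M/G \in \X$. Thus I may replace $G^*$ by $M$ and assume from the outset that $G^*/G \in \X$.

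Next I would locate the obstruction to living inside $\Aut(S)$ and show that it lies in $\X$. As the socle $S$ is the unique minimal normal subgroup of the almost simple group $G$, it is characteristic in $G$, so $S \unlhd G^*$; conjugation then yields a homomorphism $G^* \to \Aut(S)$ with kernel $C := C_{G^*}(S)$. Since $S$ is nonabelian simple, it is centreless, whence $C \cap G = C_G(S) = 1$; in particular $C \cong CG/G \le G^*/G \in \X$, so $C \in \X$. Now $C \unlhd G^*$ with $C \in \X$ forces $C \le X$: indeed $XC$ is a subgroup with normal $\X$-subgroup $C$ and $XC/C \cong X/(X \cap C) \in \X$, hence $XC \in \X$ by completeness of $\X$, and maximality of $X$ gives $XC = X$.

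Finally I would pass to the quotient and verify the three required properties. Put $\overline{G^*} := G^*/C$; the conjugation map embeds $\overline{G^*}$ into $\Aut(S)$, and since $C \cap G = 1$ the image of $G$ is isomorphic to $G$ and contains $\Inn(S) = S$, so after identification $G \unlhd \overline{G^*} \le \Aut(S)$ with socle $S$. Set $K := X/C$, which is legitimate as $C \le X$. Then $K \in \X$ as a quotient of $X$, and $K \in \m_\X(\overline{G^*})$: any $\X$-overgroup $L/C \ge K$ has $L \ge X$ and $L \in \X$ (an extension of $C \in \X$ by $L/C \in \X$), so $L = X$. It remains to check $K \cap G = H$ under the identification $G \leftrightarrow GC/C$; here $C \le X$, $[C,G] = 1$ and $C \cap G = 1$ give $GC = G \times C$ and $X \cap GC = H \times C$, so $(X/C) \cap (GC/C) = HC/C$ corresponds exactly to $H$. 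This yields the desired witness inside $\Aut(S)$.

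The main obstacle is the step guaranteeing $C = C_{G^*}(S) \in \X$: without the preliminary reduction to $G^* = XG$ the quotient $G^*/G$ need not be an $\X$-group, and then $C$ could fail to lie in $\X$, so that neither $C \le X$ nor the correct intersection $K \cap G = H$ would survive the passage to $\overline{G^*}$. Thus the genuinely non-formal point is the observation that one may first replace the arbitrary overgroup $G^*$ by $XG$, which simultaneously keeps $X$ maximal and confines the obstruction to the class $\X$.
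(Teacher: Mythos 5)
Your proof is correct, but it is organized differently from the paper's. The paper takes a witness \( G^* \) of \emph{minimal order}, observes \( G^* = GK \), and then proves that \( G^* \) has no nontrivial normal \( \X \)-subgroups at all: a nontrivial normal \( U \in \X \) must satisfy \( U \cap G = 1 \) (here the hypothesis \( S \notin \X \) is used, since otherwise \( S \le U \cap G \)), after which \( U \le K \) and a Dedekind computation produce a smaller witness \( G^*/U \), contradicting minimality; in particular \( C_{G^*}(S) = 1 \), so \( G^* \) itself embeds in \( \Aut(S) \) with no quotient bookkeeping. You avoid the minimal counterexample entirely: you first replace \( G^* \) by \( XG \) to force \( G^*/G \in \X \), then isolate only the relevant obstruction \( C = C_{G^*}(S) \), show \( C \in \X \) and \( C \le X \), and pass to \( G^*/C \), verifying constructively that \( \X \)-maximality and the intersection \( H = K \cap G \) descend --- the same Dedekind computation the paper uses inside its contradiction, deployed here to build the witness directly. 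Each approach buys something: the paper's minimality argument kills all normal \( \X \)-subgroups at once and delivers the witness on the nose, while your version is more explicit and, notably, never uses the hypothesis \( S \notin \X \) (you only need \( C \cap G = C_G(S) = 1 \), which is free from centerlessness of the nonabelian simple socle), so it in fact proves a marginally more general statement. One small point: your claim \( [C, G] = 1 \) deserves its one-line justification (two normal subgroups of \( G^* \) with trivial intersection commute), though it is dispensable anyway, since \( C \le X \) and Dedekind's lemma already give \( X \cap GC = (X \cap G)C = HC \), which is all the final identification requires.
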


\begin{proof}
	The ``if'' part follows from the definition of a strongly submaximal
	\( \X \)-subgroup.

	To show the converse, take a group \( G^* \) of the smallest order such that
	\( G \unlhd G^* \) and \( H = K \cap G \) for some \( K \in \m_\X(G^*) \).
	It is clear that \( G^* = GK \) and, in particular, \( G^* / G \in \X \).

	We claim that \( G^* \) does not contain any nontrivial normal
	\( \X \)-subgroups. Indeed, suppose that \( 1 \ne U \in \X \) is a
	normal subgroup in \( G^* \).
	If \( G \cap U \ne 1 \), then \( G \cap U \) being a normal subgroup of \( G \)
	contains the unique minimal normal subgroup \( S \) of \( G \),
	contrary to the assumption that \( S \notin \X \). Therefore, \( G \cap U = 1 \), so
	\( G \simeq \overline{G} \unlhd \overline{G^*} \), where
	\( \overline{\phantom{x}}: G^* \rightarrow G^*/U \)
	denotes the canonical epimorphism.
	Moreover, \( K \in \m_\X(G^*) \) implies \( U \le K \), and hence
	\( \overline{K} \in \m_\X\left(\overline{G^*}\right) \). Dedekind's
	lemma (see, e.g.,~\cite[Ch.~1, Theorem~3.14]{Suz1}) yields
	\[ HU = (G \cap K)U = GU \cap K. \]
	It follows that \( \overline{H} = \overline{G} \cap \overline{K} \),
	so we arrive at a contradiction with the minimality of \( G^* \).

	Since \( S \) is a characteristic subgroup of \( G \), both
	\( S \) and its centralizer \( C_{G^*}(S) \) are normal in \( G^* \).
	The group \( G \) is almost simple, so
	\( C_{G^*}(S) \cap G = C_G(S) = 1 \). Hence \( C_{G^*}(S) \)
	can be isomorphically embedded into an \( \X \)-group  \( G^*/G \),
	so it is an \( \X \)-group itself. As \( G^* \) contains no
	nontrivial normal \( \X \)-subgroups, \( C_{G^*}(S) = 1 \).
	Thus, \( G^* \) is isomorphic to a subgroup of \( \Aut(S) \).
\end{proof}

For the rest of this section, we fix the following notation.

Let \( S \) be a simple group \( D_{2n}(q) \simeq \operatorname{P\Omega}_{4n}^+(q) \),
where \( q \) is an odd prime and \( {n > 2} \). Set \( A = \Aut(S) \).
Denote by \( \widehat{S} \) the group of inner-diagonal automorphisms of \( S \),
see~\cite[7.1, 8.4.7, 12.2]{Car} or \cite[Definitions~2.5.10 and 1.15]{GLS}.
Then \( S \le \widehat{S} \le \Aut(S) \).

\begin{figure*}
	\includegraphics[width=119mm]{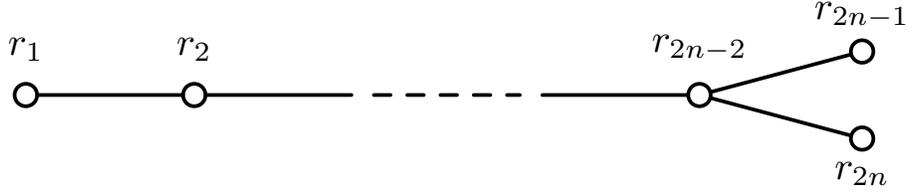}
\caption{$D_{2n}$ root system}
\label{p91}  
\end{figure*}

Let \( \Pi = \{ r_1, \dots, r_{2n} \} \) be a fundamental root system of
type \( D_{2n} \). Its Dynkin diagram is indicated in Fig.~\ref{p91}.
The symmetry of the graph in Fig.~\ref{p91} corresponding to the
transposition of roots \( r_{2n-1} \) and \( r_{2n} \) induces a so-called
graph automorphism \( \gamma \in A \) of order \( 2 \),
see~\cite[Proposition~12.2.3]{Car}. Since \( q \) is a prime,
\( A = \langle \widehat{S}, \gamma \rangle \)
(see~\cite[Theorem~12.5.1]{Car} or \cite[Theorem~2.5.12]{GLS}).
It is known that \( A/S \) is isomorphic to the dihedral group of order
\( 8 \) \cite[Theorem~2.5.12(j)]{GLS} and contains the normal subgroup
\( \widehat{S}/S \) which is isomorphic to the elementary abelian group of
order \( 4 \) (see~\cite[8.4.7 and 8.6]{Car} or
\cite[Theorem~2.5.12(c)]{GLS}).

Let \( P_0 \), \( P_1 \) and \( P_2 \) denote parabolic subgroups of
\( S \) containing the same Borel subgroup and corresponding to the following sets of roots
(see~\cite[8.2.2 and 8.3]{Car}):
\( \Pi \setminus \{ r_{2n-1}, r_{2n} \} \),
\( \Pi \setminus \{ r_{2n-1} \} \) and \( \Pi \setminus \{ r_{2n} \} \).
The next lemma states some properties of these subgroups.

\begin{lemma}\label{PropertiesOfP}
	In the above notation, the following hold.
	\begin{enumerate}[{\normalfont (i)}]
		\item \( N_S(P_i) = P_i \) for every \(i = 0, 1, 2\).
		\item \( P_i \) is a maximal subgroup of \( S \)
			for \( i = 1, 2 \), and if \( P_0 < P < S \),
			then \( P \in \{ P_1, P_2 \} \).
		\item \( P_1^\gamma = P_2 \) and
			\( P_2^\gamma = P_1 \).
		\item \( P_0^\gamma = P_0 \).
		\item There exists an abelian subgroup
			\( \widehat{T} \) of \( \widehat{S} \) such that
			\( \widehat{S} = \widehat{T}S \) and
			\( \widehat{T} \) normalizes \( P_i \) for
			\( i = 0, 1, 2 \). In particular,
			\( N_{\widehat{S}}(P_i) = \widehat{T}P_i \).
		\item Subgroups \( P_1 \) and \( P_2 \) are not
			conjugate in \( S \).
	\end{enumerate}
\end{lemma}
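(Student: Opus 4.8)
The plan is to read off all six statements from the standard theory of the $(B,N)$-pair of $S$, bringing in the ambient adjoint algebraic group only for item~(v). Let $B$ be the Borel subgroup and $T \le B$ the maximal torus of $S$ determined by $\Pi$, so that $P_0$, $P_1$, $P_2$ are the standard parabolic subgroups $P_{J_0}$, $P_{J_1}$, $P_{J_2}$ attached to the subsets $J_0 = \Pi \setminus \{r_{2n-1}, r_{2n}\}$, $J_1 = \Pi \setminus \{r_{2n-1}\}$ and $J_2 = \Pi \setminus \{r_{2n}\}$. I will repeatedly use three facts from $(B,N)$-pair theory (see \cite[Chapter~8]{Car}): every subgroup of $S$ containing $B$ equals $P_J$ for a unique $J \subseteq \Pi$; the assignment $J \mapsto P_J$ is an inclusion-preserving bijection onto these overgroups, so $P_J \le P_K$ iff $J \subseteq K$ and $P_J = P_K$ iff $J = K$; and $B$ is self-normalizing in $S$, with any two Borel subgroups of a given parabolic conjugate inside it.

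Items (i)--(iv) are then bookkeeping. For (i), if $g \in N_S(P_J)$ then $B$ and $B^g$ are both Borel subgroups of $P_J$, so there is $p \in P_J$ with $B^{gp} = B$; hence $gp \in N_S(B) = B \le P_J$ and $g \in P_J$. For (ii), any $P$ with $P_1 \le P \le S$ is an overgroup of $B$, so $P = P_J$ with $J_1 \subseteq J \subseteq \Pi$, forcing $J \in \{J_1, \Pi\}$; thus $P_1$, and likewise $P_2$, is maximal, while $P_0 < P < S$ forces $J_0 \subsetneq J \subsetneq \Pi$, and since $\Pi \setminus J_0 = \{r_{2n-1}, r_{2n}\}$ the only possibilities are $J_1$ and $J_2$. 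For (iii) and (iv), the graph automorphism acts on standard parabolics by $P_J^\gamma = P_{\gamma(J)}$; as $\gamma$ transposes $r_{2n-1}$ and $r_{2n}$ and fixes the remaining fundamental roots, $\gamma(J_1) = J_2$, $\gamma(J_2) = J_1$ and $\gamma(J_0) = J_0$, which is exactly (iii) and (iv). (Here $n > 2$ is used only to know that $\gamma$ is the relevant involutory diagram symmetry and that no triality intervenes.)

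For (vi), the Borel-conjugacy argument of (i) applies again: if $P_1^g = P_2$ for some $g \in S$, then $B$ and $B^{g^{-1}}$ are both Borel subgroups of $P_1$, and adjusting $g$ by an element of $P_1$ we find $g \in N_S(B) \cdot P_1 = P_1$, whence $P_2 = P_1^g = P_1$, contradicting $J_1 \ne J_2$. So $P_1$ and $P_2$ are not $S$-conjugate, even though $\gamma$ conjugates one to the other and $\gamma \notin S$.

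The main work is (v), and the delicate point is the correct description of the diagonal automorphisms. I will take $\widehat T$ to be the image in $\widehat S$ of the $F$-fixed points $\overline T{}^F$ of a maximally split maximal torus $\overline T \le \overline B$ of the adjoint algebraic group of type $D_{2n}$, using that the inner-diagonal automorphisms of $S$ are induced by conjugation by $\overline G{}^F$ and that representatives of $\widehat S / S$ may be chosen in $\overline T{}^F$ (see \cite[7.1, 8.4.7, 12.2]{Car} and \cite[2.5.10, 2.5.12]{GLS}). This yields at once that $\widehat T$ is abelian, that $\widehat S = \widehat T S$, and that $\widehat T$ normalizes every standard parabolic, since $\overline T$ lies in every standard parabolic of the algebraic group and $\widehat T$ stabilizes $S$. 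The ``in particular'' clause then follows from (i) by a Dedekind-type argument: given $g \in N_{\widehat S}(P_i)$, write $g = ts$ with $t \in \widehat T$ and $s \in S$; since $t$ and $g$ normalize $P_i$, so does $s$, hence $s \in N_S(P_i) = P_i$ and $g \in \widehat T P_i$. I expect the one genuinely technical step to be the verification that $\overline T{}^F$ surjects onto $\widehat S / S$, so that $\widehat S = \widehat T S$; everything else reduces to the $(B,N)$-pair dictionary together with the action of $\gamma$ on $\Pi$.
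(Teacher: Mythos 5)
Your proof is correct and takes essentially the same route as the paper: the paper disposes of (i), (ii), (iii), (iv) and (vi) by citing Carter's standard parabolic theory (Theorems~8.3.2--8.3.4 and Proposition~12.2.3 of \cite{Car}), handles (v) via the Cartan subgroup of \( \widehat{S} \) from \cite[Proposition~2.6.9]{GLS}, and closes with exactly your Dedekind-type factorization \( g = xy \), \( x \in \widehat{T} \), \( y \in N_S(P_i) = P_i \). The only difference is stylistic: where the paper cites, you unwind the citations into the underlying \( (B,N) \)-pair arguments (self-normalization and non-conjugacy of standard parabolics via conjugacy of Borel subgroups, the lattice bijection \( J \mapsto P_J \), and the action of \( \gamma \) on \( \Pi \)), which is the same mathematics.
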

\begin{proof}
	See~\cite[Theorem~8.3.3]{Car} for~(i) and
	\cite[Theorems~8.3.2 and~8.3.4]{Car} for~(ii). Statements (iii)
	and (iv) follow from the definitions of subgroups \( P_i \)
	and automorphism \( \gamma \), see~\cite[Proposition~12.2.3]{Car}.
	It follows from \cite[Proposition~2.6.9]{GLS} that the Cartan subgroup
	\( \widehat{T} \) is an abelian subgroup of \( \widehat{S} \) such that \( \widehat{S} = \widehat{T}S \)
	and \( \widehat{T} \) normalizes \( P_i \) for \( i = 0, 1, 2 \).
	Now, if \( g \in N_{\widehat{S}}(P_i) \) then \( g = xy \) for
	\( x \in \widehat{T} \) and \( y \in S \). Hence
	\[ P_i = P_i^g = P_i^{xy} = P_i^y, \]
	so \( y \in N_S(P_i) = P_i \) by~(i). Therefore,
	\( g \in \widehat{T}P_i \) and
	\( N_{\widehat{S}}(P_i) \le \widehat{T}P_i \).
	The reverse inclusion is clear, so~(v) is proved.
	Statement~(vi) follows from \cite[Theorem~8.3.3]{Car} or \cite[Theorem~2.6.5(c)]{GLS}.
\end{proof}

Let \( G \) denote a subgroup of \( \widehat{S} \) containing \( S \)
such that  \( G/S \) has order \( 2 \) and it is not normal in the dihedral group \( A/S \)
(there are exactly two such subgroups; they are normal in \( \widehat{S} \)
and are permuted by \( \gamma \), see~\cite[Theorem~2.5.12(j)]{GLS}).
Observe that \( G \) is subnormal in \( A \).

It is easily seen that the class \( \X \) of all finite groups with nonabelian
composition factors having order less than \( |S| \) is complete.
Since \( A/S \) is solvable, the set \( \m_\X(G) \) coincides with
the set of subgroups of \( G \) maximal among those that do not contain \( S \).
The following proposition shows that there are submaximal \( \X \)-subgroups
which are not strongly submaximal.

\begin{proposition}\label{Counterexample}
	If \( H = N_G(P_0) \), then
	\( H \in \sm_\X^{\phantom{\circ}}(G) \setminus  \sm^{\circ}_\X(G) \).
\end{proposition}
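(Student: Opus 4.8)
The plan is to establish the two memberships separately, both turning on one asymmetry: by Lemma~\ref{PropertiesOfP}(iii),(iv) the graph automorphism \( \gamma \) fixes \( P_0 \) but interchanges \( P_1 \) and \( P_2 \), so \( \gamma \in N_A(P_0) \) while \( \gamma \notin N_A(P_1) \cup N_A(P_2) \). This is what will let \( N_A(P_0) \) be \( \X \)-maximal in \( A \) even though its analogues inside \( \widehat{S} \) and \( G \), where \( \gamma \) is unavailable, are not.

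For \( H \in \sm_\X(G) \), I would embed \( G \) subnormally via \( G \unlhd \widehat{S} \unlhd A \) and take \( G^* = A \), \( X = N_A(P_0) \); then \( X \cap G = N_G(P_0) = H \) is immediate. It remains to check \( X \in \m_\X(A) \), which, since \( A/S \) is solvable, amounts to showing that \( N_A(P_0) \) is maximal among subgroups of \( A \) not containing \( S \). By Lemma~\ref{PropertiesOfP}(i), \( N_A(P_0) \cap S = P_0 \ne S \), so \( N_A(P_0) \in \X \). If \( N_A(P_0) < Y \le A \), then \( Y \cap S \ge P_0 \), so by Lemma~\ref{PropertiesOfP}(ii) either \( S \le Y \) (as desired) or \( Y \cap S \in \{ P_0, P_1, P_2 \} \). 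The value \( P_0 \) gives \( Y \le N_A(P_0) \), a contradiction, while \( P_1 \) or \( P_2 \) forces \( Y \), and hence \( \gamma \in N_A(P_0) \le Y \), to normalize \( P_1 \) or \( P_2 \), which is impossible by Lemma~\ref{PropertiesOfP}(iii). Thus \( N_A(P_0) \in \m_\X(A) \) and \( H \in \sm_\X(G) \).

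For \( H \notin \sm^\circ_\X(G) \), I would use Lemma~\ref{AlmostSimpleSrongSubmax}: since \( G \) is almost simple with socle \( S \notin \X \), it suffices to rule out every overgroup with \( G \unlhd G^* \le A \). In the dihedral quotient \( A/S \), the subgroup \( G/S \) is a non-normal subgroup of order \( 2 \) lying in \( \widehat{S}/S \), and its normalizer is exactly \( \widehat{S}/S \); hence \( G^* \in \{ G, \widehat{S} \} \), and I must show that neither admits \( K \in \m_\X(G^*) \) with \( K \cap G = H \). For \( G^* = G \) this would require \( H \in \m_\X(G) \), which fails: from Lemma~\ref{PropertiesOfP}(v) and \( P_0 < P_1 \) one checks \( H = N_G(P_0) \le N_G(P_1) \) with \( N_G(P_1) \cap S = P_1 \ne S \), so \( H \) is properly contained in the \( \X \)-subgroup \( N_G(P_1) \). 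For \( G^* = \widehat{S} \), any such \( K \) satisfies \( K \cap S = H \cap S = P_0 \) (because \( S \le G \)), whence \( K \le N_{\widehat{S}}(P_0) = \widehat{T} P_0 \) by Lemma~\ref{PropertiesOfP}(v); but \( \widehat{T} P_0 < \widehat{T} P_1 = N_{\widehat{S}}(P_1) \) and \( N_{\widehat{S}}(P_1) \cap S = P_1 \ne S \), so \( K \) again lies properly inside an \( \X \)-subgroup, contradicting its \( \X \)-maximality.

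The main obstacle is the case \( G^* = \widehat{S} \), and more broadly the verification that a normal overgroup of \( G \) can climb no higher than \( \widehat{S} \). The entire example is arranged so that \( \widehat{S} \), lacking \( \gamma \), cannot promote \( N(P_0) \) to an \( \X \)-maximal subgroup, whereas the subnormal overgroup \( A \) can; pinning this down requires the precise bookkeeping of which parabolics each normalizer fixes, together with the computation \( N_{A/S}(G/S) = \widehat{S}/S \).
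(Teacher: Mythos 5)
Your proposal is correct, and it diverges from the paper in an interesting way in its first half. To prove \( N_A(P_0)\in\m_\X(A) \), the paper argues by contradiction from a hypothetical \( N_A(P_0)<M<A \): it first establishes \( A=N_A(P_0)S \) from Lemma~\ref{PropertiesOfP}(iv),(v), compares indices to force \( P_0 < M\cap S \), invokes (ii) to get \( M\cap S=P_i \) with \( i\in\{1,2\} \), and then runs a Frattini-style argument (\( A=N_A(P_i)S \) would make the \( S \)-conjugacy class of \( P_i \) invariant under \( A \)), contradicting (iii) together with the nonconjugacy statement (vi). You instead observe directly that \( \gamma\in N_A(P_0) \) by (iv), so any overgroup \( Y>N_A(P_0) \) not containing \( S \) has \( Y\cap S\in\{P_0,P_1,P_2\} \) by (ii) and normalizes \( Y\cap S \); the case \( P_0 \) contradicts properness and the cases \( P_1,P_2 \) contradict (iii), since \( \gamma\in Y \). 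This is shorter, avoids the index computation and statement (vi) entirely, and establishes \( \X \)-maximality directly (the paper proves the formally stronger fact that \( N_A(P_0) \) is maximal as an abstract subgroup of \( A \), which it then converts to \( \X \)-maximality). In the second half your argument is essentially the paper's: both pass through Lemma~\ref{AlmostSimpleSrongSubmax}, pin down \( G^*\le\widehat{S} \) via \( N_{A/S}(G/S)=\widehat{S}/S \), and derive the contradiction from the Dedekind computation \( K\le N_{G^*}(P_0)=(G^*\cap\widehat{T})P_0\le(G^*\cap\widehat{T})P_1=N_{G^*}(P_1) \) with \( N_{G^*}(P_1)\cap S=P_1\ne S \); your split into the two cases \( G^*\in\{G,\widehat{S}\} \) is a harmless specialization of the paper's uniform treatment of an arbitrary \( G^* \) between \( G \) and \( \widehat{S} \) (and is justified, since \( G^*/S \) lies between \( G/S \) of order \( 2 \) and the Klein four group \( \widehat{S}/S \)). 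All the small supporting facts you rely on check out: \( P_0=N_S(P_0)\le N_A(P_0) \), \( N_A(P_0)\cap S=P_0\ne S \) so \( N_A(P_0)\in\X \), \( G\unlhd\widehat{S}\unlhd A \) gives the required subnormal embedding, and the properness of \( K<N_{G^*}(P_1) \) follows from comparing intersections with \( S \).
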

\begin{proof}
	First, we prove that \( H \in \sm_\X(G) \). It suffices to show that
	\( N_A(P_0) \) is maximal in \( A \), because in this case \( N_A(P_0) \)
	is \( \X \)-maximal in \( A \) and
	\[ H = N_G(P_0) = G \cap N_A(P_0) \in \sm_\X(G). \]
	
	If \( N_A(P_0) \) is not maximal in \( A \), then there is
	a subgroup \( M \) with \( N_A(P_0) < M < A \).
	By Statements~(iv) and (v) of Lemma~\ref{PropertiesOfP}, we have
	\( A = \langle \widehat{S}, \gamma \rangle \le N_A(P_0)S \).
	Therefore,
	\[ N_A(P_0)/(N_A(P_0) \cap S) \simeq N_A(P_0)S/S = A/S = MS/S
	\simeq M/(M \cap S). \]
	Hence, the equality \( N_A(P_0) \cap S = M \cap S \) does not hold.
        By Lemma~\ref{PropertiesOfP}(i),
	\[ P_0 = N_S(P_0) = N_A(P_0) \cap S < M \cap S. \]
	As \( M \cap S < S \), Lemma~\ref{PropertiesOfP}(ii)
	yields \( M \cap S = P_i \) for \( i = 1 \) or \( i = 2 \).
	Since \( M \cap S \unlhd M \), we obtain
	\[ M \le N_A(P_i) \text{ and } A = MS = N_A(P_i)S. \]
	This implies that the conjugacy class of \( P_i \) is fixed by \( A \),
	contrary to the fact that \( \gamma \in A \) permutes the conjugacy classes of
	\( P_1 \) and \( P_2 \) in \( S \)
	(see Statements~(iii) and (vi) of Lemma~\ref{PropertiesOfP}).
	Thus, \( N_A(P_0) \) is maximal in~\( A \), and we are done.

	Let us show that \( H \notin \sm^{\circ}_\X(G) \). Suppose the
	contrary. Then by Lemma~\ref{AlmostSimpleSrongSubmax}, there is
	a subgroup \( G^* \) of \( A \) such that \( G \unlhd G^* \) and
	\( H = K \cap G \) for some \( K \in \m_\X(G^*) \), i.e.\ for some
	maximal subgroup \( K \) of \( G^* \) that does not contain~\( S \).
	Since \( G^* \) normalizes \( G \), a subgroup \( G^*/S \) of the
	dihedral group \( A/S \) lies in the normalizer \( N_{A/S}(G/S) \),
	which is equal to \( \widehat{S}/S \).
	Therefore, \( G^* \le \widehat{S} \).

	We claim that \( K = N_{G^*}(P_0) \). As \( K \) is maximal in
	\( G^* \), it suffices to show that \( K \) normalizes \( P_0 \).
	Indeed, \( K \) normalizes \( K \cap S \) and, by Lemma~\ref{PropertiesOfP}(i),
	\[ K \cap S = K \cap G \cap S = H \cap S = N_G(P_0) \cap S =
	   N_S(P_0) = P_0. \]
	If \( \widehat{T} \) is a subgroup from
	Lemma~\ref{PropertiesOfP}(v), then for all \( i = 0, 1, 2, \)
	\[ N_{\widehat{S}}(P_i) = \widehat{T}P_i. \]
	By Dedekind's lemma,
	\[ N_{G^*}(P_i) = G^* \cap N_{\widehat{S}}(P_i) =
	   G^* \cap \widehat{T}P_i = (G^*\cap \widehat{T})P_i. \]
	Since
	\[ K = N_{G^*}(P_0) = (G^* \cap \widehat{T})P_0 \le
	   (G^* \cap \widehat{T})P_1 = N_{G^*}(P_1), \]
	the maximality of \( K \) implies \( N_{G^*}(P_0) = N_{G^*}(P_1)\).
	Lemma~\ref{PropertiesOfP}(ii) yields
	\[ P_0 = N_S(P_0) = S \cap N_{G^*}(P_0) = S \cap N_{G^*}(P_1) =
	   N_S(P_1) = P_1, \]
	contrary to the fact that \( P_0 \neq P_1 \).
\end{proof}

\section{The proof of Wielandt-Hartley's theorem for submaximal \( \X \)-subgroups}\label{Proof}

Given a group \( G \), the set of prime divisors of the order of \( G \) is denoted by~\( \pi(G) \).
If \( \pi \) is an arbitrary set of primes, then \( G \) is called a \textit{\( \pi \)-group}
provided \( \pi(G) \subseteq \pi \), and a \textit{\( \pi' \)-group} whenever \( \pi(G) \cap \pi = \varnothing \).

Given a class \( \X \), set \( \pi(\X) = \bigcup_{X \in \mathfrak{X}} \pi(X) \).
Denote by \( O_\X(G) \) the largest normal \( \X \)-subgroup of \( G \).
If \( \X \) is a class of \( p \)-groups, \( \pi \)-groups or \( \pi' \)-groups, then
we denote the subgroup \( O_\X(G) \)  by \( O_p(G) \), \( O_\pi(G) \) or \( O_{\pi'}(G) \) respectively.

Observe that for every \( p\in\pi(\X) \) the group of order \( p \) lies in \( \X \). Therefore, 
a group contains no nontrivial \( \X \)-subgroups if and only if it is a \( \pi' \)-group for \( \pi=\pi(\X) \). 
Thus, Theorem~\ref{WieHartSubnorm} and hence Theorem~\ref{WieHartSubmax} are equivalent to the following proposition.

\begin{proposition}\label{refined}
	Let \( \X \) be a complete class of finite groups and \( \pi = \pi(\X) \).
	Let \( G \) be a finite group and let \( H \in \m_\X(G) \).
	If \( A \) is subnormal in \( G \), then \( N_A(H \cap A)/(H \cap A) \) is a \( \pi' \)-group.
\end{proposition}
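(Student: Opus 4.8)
The plan is to prove Proposition~\ref{refined} by induction on the defect \( d \) of the subnormal subgroup \( A \) in \( G \), that is, on the length of a shortest subnormal series \( A = A_d \unlhd A_{d-1} \unlhd \dots \unlhd A_0 = G \). The case \( d \le 1 \) is exactly Theorem~\ref{WieHart}: there \( A \unlhd G \), and since \( \X \) contains the group of order \( p \) for every \( p \in \pi \), the quotient \( N_A(H \cap A)/(H \cap A) \) has no nontrivial \( \X \)-subgroup precisely when it is a \( \pi' \)-group. Arguing by contradiction for \( d \ge 2 \), I would set \( V = H \cap A \) and assume that \( N_A(V)/V \) is not a \( \pi' \)-group. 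Cauchy's theorem then supplies an element of some order \( p \in \pi \) in \( N_A(V)/V \), and completeness of \( \X \) lets me fix a subgroup \( W \) with \( V \lhd W \le N_A(V) \), \( |W:V| = p \) and \( W \in \X \) (an extension of \( V \in \X \) by a group of prime order in \( \pi(\X) \)). Thus \( W \) is an \( \X \)-subgroup of \( A \) strictly containing and normalizing \( V \); the goal is to contradict its existence.

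For the inductive step I would choose \( M = A_1 \unlhd G \), a proper normal subgroup in which \( A \) is subnormal of defect \( d-1 \), and record that \( V = H \cap A = (H \cap M) \cap A \) since \( A \le M \). The temptation is to apply the induction hypothesis inside \( M \) to the subnormal subgroup \( A \); this would require a maximal \( \X \)-subgroup of \( M \) whose intersection with \( A \) equals \( V \). The decisive difficulty is that \( H \cap M \) itself need not be \( \X \)-maximal in \( M \) — this is exactly the failure of Question~(b) that motivated the whole theory — so the induction does not transfer verbatim. I would therefore pass to a counterexample of least order and carry out the standard reductions. Since \( O_\X(G) \) is contained in every member of \( \m_\X(G) \), Dedekind's lemma shows that passing to \( G/O_\X(G) \) preserves all the data, so I may assume \( O_\X(G) = 1 \); minimality of \( |G| \) makes Proposition~\ref{refined} available in the proper section \( M \); and applying Theorem~\ref{WieHart} directly to \( M \unlhd G \) already yields that \( N_M(H \cap M)/(H \cap M) \) is a \( \pi' \)-group, which constrains how the \( \X \)-subgroup \( W \) can lie over \( V \).

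The heart of the matter is to bridge the gap between \( \X \)-maximality in \( M \) and control of \( N_A(V)/V \), and this is where I expect the real obstacle to be. The plan is to replace \( A \) by a minimal subnormal subgroup of \( G \) over which \( W \) still enlarges \( V \), reducing to the case that \( A \) is characteristically simple; the elementary abelian case is settled at once from \( O_\X(G) = 1 \) and the \( \pi' \)-condition, so one may take \( A = S_1 \times \dots \times S_k \) with \( S_i \cong S \) nonabelian simple. Here the Guo--Revin generalization of Shemetkov's theorem \cite[Proposition~8]{GR1} is essential: it guarantees that \( H \) meets every non-\( \pi' \) subnormal subgroup nontrivially, pinning down \( V = H \cap A \) as a nontrivial \( \X \)-subgroup whose normalizer in \( A \) can be examined factor by factor. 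Finally, the action of \( W \) on the set of simple direct factors of \( A \) and on each factor is governed by \( \Aut(S) \); invoking the Schreier conjecture — the solvability of \( \operatorname{Out}(S) \), on which every known proof of Wielandt--Hartley's theorem rests — I would argue that \( W \) can induce on the socle only automorphisms already coming from \( V \) together with inner ones, which together with the \( \pi' \)-bound extracted from Theorem~\ref{WieHart} for \( M \unlhd G \) forces \( W \le V \), contradicting \( W > V \). The Schreier step, controlling the \( \pi \)-part of the induced outer action, is the part I expect to require the most care, exactly as the phrase ``all known proofs use the Schreier conjecture'' suggests.
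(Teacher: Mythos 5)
You correctly isolate the central obstruction --- \( H \cap M \) need not lie in \( \m_\X(M) \), so induction on the defect cannot be transferred verbatim --- but your plan never actually overcomes it, and two of its steps are genuine gaps. First, the reduction ``replace \( A \) by a minimal subnormal subgroup of \( G \) over which \( W \) still enlarges \( V \)'' is unjustified: the hypothesis that \( N_A(H \cap A)/(H \cap A) \) contains an element of order \( p \in \pi \) does not descend to a smaller subnormal subgroup \( B \le A \), since both the intersection \( H \cap B \) and the normalizer \( N_B(H \cap B) \) change, and nothing ties your subgroup \( W \) to any single minimal subnormal subgroup (also, a minimal subnormal subgroup of a finite group is simple, not merely characteristically simple, so even the target of this reduction is misstated). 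Second, the decisive step --- invoking solvability of \( \operatorname{Out}(S) \) to force \( W \le V \) --- is only announced, not argued: no mechanism is given that converts the \( \pi' \)-bound on \( N_M(H \cap M)/(H \cap M) \) for the normal subgroup \( M = A_1 \) into control of \( N_A(V)/V \), and that normal-to-subnormal bridge is exactly what the proposition asserts and what your base case does not supply. As it stands, your argument establishes the statement only for defect at most \( 1 \), i.e.\ in the cases it already assumes.

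For comparison, the paper's proof runs a minimal counterexample with respect to \( |G| \) (not defect) and closes the loop with three ingredients absent from your outline: (1) it proves \( O_{\pi'}(G) = 1 \) in addition to \( O_\X(G) = 1 \), which is needed at the very end; (2) it reduces to the generation \( G = \langle N_A(A \cap H), H \rangle \), since otherwise \( X = \langle N_A(A \cap H), H \rangle \) is a proper subgroup to which minimality applies (with \( H \in \m_\X(X) \), \( A \cap X \) subnormal in \( X \), and \( N_{A \cap X}(A \cap H) = N_A(A \cap H) \)); and (3) instead of shrinking \( A \), it analyzes a minimal normal subgroup \( M = S_1 \times \dots \times S_n \) of \( G \): a centralizer lemma (a minimal subnormal subgroup not contained in \( A \) centralizes \( A \)) together with the factorization \( M \cap H = (S_1 \cap H) \times \dots \times (S_n \cap H) \) obtained from \cite[Proposition~10]{GR1} shows that both \( N_A(A \cap H) \) and \( H \) normalize \( M \cap H \), so by the generation property \( M \cap H \unlhd G \), whence \( M \cap H = 1 \) by \( O_\X(G) = 1 \); then \cite[Proposition~8]{GR1} forces \( M \) to be a \( \pi' \)-group, so \( M = 1 \) by \( O_{\pi'}(G) = 1 \), contradicting the non-simplicity of \( G \). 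Note that no explicit computation with \( \Aut(S) \) or \( \operatorname{Out}(S) \) appears in the paper's argument --- the Schreier-conjecture input is entirely absorbed into the cited results. If you want to salvage your outline, the missing ideas to import are precisely the generation reduction \( G = \langle N_A(A \cap H), H \rangle \) and the centralizer lemma; without them \( W \), \( H \) and the subnormal structure of \( A \) never interact, and the contradiction you aim for cannot be reached.
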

\begin{proof}
	Suppose the contrary, and let \( G \) be a counterexample of minimal order.
	If \( A = 1 \) or \( A = G \), then we immediately get a contradiction,
	so \( 1 < A < G \) and, in particular, \( G \) is not a simple group.

	\begin{lemma}\label{triv}
		\( O_\X(G) = 1 \) and \( O_{\pi'}(G) = 1.\) In particular, \( G \) contains no abelian minimal subnormal subgroups.
	\end{lemma}
	\begin{proof}
		Suppose that \( K = O_\X(G) > 1 \). Let
		\( \overline{\phantom{A}} : G \to G/K \) denote the canonical epimorphism.
		Clearly, \( K \leq H \), so \( \overline{H} \) is a maximal \( \X \)-subgroup of \( \overline{G} \).
		Therefore,
		\[ N_{\overline{AK}}(\overline{AK} \cap \overline{H})/(\overline{AK} \cap \overline{H}) \simeq N_{AK}(AK \cap H)/(AK \cap H), \]
		where by the minimality of \( G \), the quotient on the left-hand side is a \( \pi' \)-group.
		In virtue of Dedekind's lemma,
		\[ N_{AK}(AK \cap H)/(AK \cap H) = N_{AK}((A \cap H)K)/((A \cap H)K). \]
		Since \( K \) is normal in \( G \), we have \( KN_A(A \cap H) \leq N_{AK}((A \cap H)K) \).
		Thus,
		\[ N_{AK}((A \cap H)K)/((A \cap H)K) \geq KN_A(A \cap H)/((A \cap H)K), \]
		and the group on the right-hand side is also a \( \pi' \)-group. Its order is equal to
		\begin{equation*}
			\begin{split}
				|KN_A(A \cap H) : K(A \cap H)| = \frac{|K||N_A(A \cap H)||A \cap H \cap K|}{|K \cap N_A(A \cap H)||K||A \cap H|} =\\
				= |N_A(A \cap H) : A \cap H||A \cap K : N_{A \cap K}(A \cap H)|.
			\end{split}
		\end{equation*}
		Since the second factor is an integer, it follows that \( N_A(A \cap H)/(A \cap H) \) is a \( \pi' \)-group,
		contrary to the choice of \( G \). Therefore, \( O_\X(G) = 1 \).

		Suppose that \( O_{\pi'}(G) > 1 \), and denote by \( K \) a minimal normal subgroup of \( G \)
		contained in \( O_{\pi'}(G) \). Then \( K > 1 \) and \( K \) normalizes \( A \) by \cite[Theorem~2.6]{isaacs}.
		Let \( P \) be a Sylow \( p \)-subgroup of \( AK \) for some \( p \in \pi \). The subgroup \( A \) is normal in \( AK \)
		and \( K \) is a \( \pi' \)-group, so \( P \leq A \). Since this holds for every \( p \in \pi \), we derive that
		all Sylow \( p \)-subgroups and hence all \( \pi \)-subgroups of \( AK \) lie in \( A \). Therefore, \( AK \cap H = A \cap H \).
		As \( K \) is normal in	\( G \), we have
		\[ N_A(A \cap H) \leq N_{AK}((A \cap H)K) = N_{AK}((AK \cap H)K) = N_{AK}(AK \cap HK), \]
		where the last equality holds by Dedekind's lemma.

		Let \( \overline{\phantom{A}} : G \to G/K \) be the canonical epimorphism.
		In view of \cite[Proposition~4]{GR1}, \( \overline{H} \) is a maximal \( \X \)-subgroup of \( \overline{G} \).
		By the minimality of \( G \), the quotient \( N_{\overline{A}}(\overline{A} \cap \overline{H})/(\overline{A} \cap \overline{H}) \)
		is a \( \pi' \)-group. Since \( KN_A(A \cap H)/(K(A \cap H)) \) is a subgroup of
		\[ N_{AK}(AK \cap HK)/(AK \cap HK)
		\simeq N_{\overline{A}}(\overline{A} \cap \overline{H})/(\overline{A} \cap \overline{H}), \]
		\( KN_A(A \cap H)/K(A \cap H) \) is a \( \pi' \)-group.
		As \( K \cap H = 1 \),
		\[ |KN_A(A \cap H) : K(A \cap H)||K \cap N_A(A \cap H)| = |N_A(A \cap H) : A \cap H|, \]
		and hence \( N_A(A \cap H)/(A \cap H) \) is also a \( \pi' \)-group, a contradiction.
		Thus, \( O_{\pi'}(G) = 1 \).

		If \( p \) is a prime, then \( O_p(G) \) lies either in \( O_{\pi'}(G) \) or \( O_\X(G) \).
		It follows that \( O_p(G) = 1 \) for every prime \( p \). As a consequence,
		all minimal subnormal subgroups of \( G \) are nonabelian.
	\end{proof}

	\begin{lemma}\label{centr}
		If \( S \) is a minimal subnormal subgroup of \( G \) and \( S\nsubseteq A \), then \( {[S, A] = 1} \).
	\end{lemma}
	\begin{proof}
		It follows from \cite[Lemma~9.17]{isaacs} that the normal closure  \( M = S^G \)
		of \( S \) in \( G \) is a minimal normal subgroup of~\( G \).
		By \cite[Theorem~2.6]{isaacs}, \( M \) normalizes~\( A \), so \( A \) is normal in \( N = \langle M, A \rangle \).
		The normal closure \( S^N \) of \( S \) in \( N \)  is a minimal normal
		subgroup of \( N \) and does not lie in \( A \). Hence \( S^N \cap A = 1 \) and
		\( [S, A] \leq [S^N, A] = 1 \), as required.
	\end{proof}

	\begin{lemma}\label{factor}
		Let \( M = S_1 \times \dots \times S_n \) be a subnormal subgroup of \( G \),
		where \( S_i \), \( i = 1, \dots, n \), are simple groups. Then
		\[ M \cap H = (S_1 \cap H) \times \dots \times (S_n \cap H). \]
	\end{lemma}
	\begin{proof}
		Note that \( M \cap H \geq S_i \cap H \) for all \( i = 1, \dots, n \), so
		\[ M \cap H \geq (S_1 \cap H) \dots (S_n \cap H). \]
		By \cite[Proposition~10]{GR1}, there are \( L_i \in \sm_\X(S_i) \), \( i = 1, \dots, n \),
		such that \( M \cap H = L_1 \times \dots \times L_n \). Clearly \( L_i \leq S_i \cap H \) for \( i = 1, \dots, n \).
		It follows that
		\[ M \cap H \leq (S_1 \cap H) \dots (S_n \cap H), \]
		and we are done.
	\end{proof}

	We proceed with the proof of Proposition~\ref{refined}. Set \( X = \langle N_A(A \cap H), \, H \rangle \)
	and \( L = A \cap X \). Observe that \( H \) is a maximal \( \X \)-subgroup of \( X \),
	\( L \) is subnormal in \( X \), and \( L \cap H = A \cap H \). Furthermore,
	\[ N_L(L \cap H) = N_{A \cap X}(A \cap H) = N_A(A \cap H). \]
	If \( X < G \), then by the minimality of \( G \) the quotient \( N_L(L \cap H)/(L \cap H) \) is a \( \pi' \)-group.
	It follows that \( N_A(A \cap H)/(A \cap H) \) is also a \( \pi' \)-group, a contradiction.
	Hence we may assume that \( G = \langle N_A(A \cap H), \, H \rangle \).

	Let \( M \) be a minimal normal subgroup of \( G \). Then \( M = S_1 \times \dots \times S_n \),
	where \( S_i \), \( i = 1, \dots, n \), are nonabelian simple groups.
	Since \( A \cap M \) is subnormal in \( M \), it is a (possibly empty) product of some \( S_i \), \( i = 1, \dots, n \).
	Without loss of generality, we may assume that \( A \cap M = S_1 \times \dots \times S_k \), \( 0 \leq k \leq n \).
	Applying Lemma~\ref{factor} to \( M \) and \( A \cap M \), we obtain
	\[ M \cap H = (S_1 \cap H) \times \dots \times (S_n \cap H ) \text{ and }
	A \cap M \cap H = (S_1 \cap H) \times \dots \times (S_k \cap H ). \]
	Since \( M = (A \cap M) \times S_{k+1} \times \dots \times S_n \), it follows that
	\[ M \cap H = (A \cap M \cap H)\times(S_{k+1} \cap H)\times \dots \times(S_n \cap H). \]
	Clearly, \( N_A(A \cap H) \) normalizes \( A \cap M \cap H \). For every \( i \in \{ k+1, \dots, n \} \),
	the subgroup \( A \) centralizes \( S_i \) due to Lemma~\ref{centr}, so \( N_A(A \cap H) \) centralizes~\( {S_i \cap H} \).
	Consequently, \( N_A(A \cap H) \) normalizes all factors constituting \( M \cap H \), and therefore
	it normalizes \( M \cap H \) itself. Obviously, \( H \) also normalizes \( M \cap H \).
	Since \( G = \langle N_A(A \cap H), H \rangle \), we derive that \( M \cap H \) is normal in \( G \).

	Now, \( M \cap H \) is a normal \( \X \)-subgroup of \( G \), so \( M \cap H = 1 \) by Lemma~\ref{triv}.
	It follows from \cite[Proposition~8]{GR1} that \( M \) is a \( \pi' \)-group, hence \( M = 1 \) again by Lemma~\ref{triv}.
	Thus, \( G \) must be a simple group which gives us a final contradiction.
\end{proof}

\section{Applications}\label{Corollaries}

In this section we prove Corollaries~\ref{SubmaxProjections}--\ref{Subnormality} from Introduction
and thus show how one can apply the strong version of Wielandt-Hartley's theorem. 

Given a complete class \( \X \), a group \( G \) is said to be \textit{\( \X \)-separable}, if \( G \) possesses a subnormal series~(\ref{series})
such that each factor \( G^i \) either belongs to \( \X \) or contains no nontrivial \( \X \)-subgroups. Clearly, a subgroup of an
\( \X \)-separable group is also \( \X \)-separable.

\begin{lemma}\label{Chunikhin}
	{\rm \cite[12.10]{Wie4}}
	Let \( G \) be an \( \X \)-separable group. Then all maximal
	\( \X \)-subgroups of \( G \) are conjugate.
\end{lemma}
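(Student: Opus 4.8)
The plan is to argue by induction on \( |G| \). Throughout set \( \pi = \pi(\X) \) and recall from Section~\ref{Proof} that a finite group has no nontrivial \( \X \)-subgroup exactly when it is a \( \pi' \)-group; thus in the definition of \( \X \)-separability the factors that contain no nontrivial \( \X \)-subgroup are precisely the \( \pi' \)-factors. Since \( \X \subseteq \{\pi\text{-groups}\} \) and the class of \( \pi' \)-groups is closed under subgroups and quotients, projecting a witnessing subnormal series shows that the class of \( \X \)-separable groups is closed under passing to quotients as well as to subgroups. My first reduction is to choose a minimal normal subgroup \( N \) of \( G \) and observe that \( N \) is either an \( \X \)-group or a \( \pi' \)-group: writing \( N = S_1 \times \dots \times S_m \) with all \( S_i \cong S \) simple, the simple group \( S \) is itself \( \X \)-separable with no proper nontrivial subnormal subgroup, hence \( S \in \X \) or \( S \) is a \( \pi' \)-group, and completeness of \( \X \) (closure under extensions) then forces \( N \in \X \) in the first case while \( N \) is a \( \pi' \)-group in the second.

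In the case \( N \in \X \), for any \( H \in \m_\X(G) \) the product \( HN \) is an extension of \( N \in \X \) by \( HN/N \cong H/(H \cap N) \in \X \), hence \( HN \in \X \), so maximality gives \( N \le H \). Therefore \( H \mapsto H/N \) is an inclusion- and conjugacy-preserving bijection between \( \m_\X(G) \) and \( \m_\X(G/N) \); by induction the latter are conjugate in \( G/N \), and since every member contains \( N \) the conjugacy lifts back to \( G \).

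The substantial case is \( N \) a \( \pi' \)-group. Fix \( H, K \in \m_\X(G) \); the key sublemma I would establish is that \( HN/N \in \m_\X(G/N) \). Suppose \( HN/N \) is contained in an \( \X \)-subgroup \( M/N \) of \( G/N \), with \( N \le M \le G \). Then \( N \unlhd M \) is a \( \pi' \)-group while \( M/N \in \X \) is a \( \pi \)-group, so \( \gcd(|N|, |M/N|) = 1 \); by Schur--Zassenhaus \( N \) has a complement in \( M \), and coprimality forces one of \( N, M/N \) to have odd (hence, by the Feit--Thompson theorem, solvable) order, so all complements are conjugate and every \( \pi \)-subgroup of \( M \) lies in one of them. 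Since \( H \cap N = 1 \) (an \( \X \)-group is a \( \pi \)-group), \( H \) lies in some complement \( C \), which is an \( \X \)-subgroup of \( G \) with \( H \le C \); maximality of \( H \) gives \( H = C \), whence \( M/N = CN/N = HN/N \), proving the sublemma. By the sublemma and induction, \( HN/N \) and \( KN/N \) are conjugate in \( G/N \); replacing \( K \) by a conjugate we may assume \( HN = KN =: L \). Now \( H \) and \( K \) are both complements to the normal \( \pi' \)-subgroup \( N \) in \( L \), so a final application of Schur--Zassenhaus conjugacy makes \( H \) and \( K \) conjugate in \( L \le G \).

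The hard part is the case where \( N \) is a \( \pi' \)-group, and within it the sublemma that passage to \( G/N \) carries maximal \( \X \)-subgroups to maximal \( \X \)-subgroups: this fails for inclusion-maximality unless one exploits coprimality, and it is exactly here that the Schur--Zassenhaus theorem (for existence of complements) together with Feit--Thompson (to upgrade existence to conjugacy, and thereby to force every \( \pi \)-subgroup into a complement) enters. The remaining bookkeeping is routine.
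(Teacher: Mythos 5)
Your proof is correct: the induction on \( |G| \) via a minimal normal subgroup \( N \), split into the case \( N \in \X \) (where \( N \) lies in every \( \X \)-maximal subgroup and one passes to \( G/N \)) and the case \( N \) a \( \pi' \)-group (where Schur--Zassenhaus, with Feit--Thompson supplying the solvability needed for conjugacy of complements, both forces \( HN/N \in \m_\X(G/N) \) and conjugates the complements \( H \) and \( K \) in \( HN \)), is exactly the classical argument behind the sources the paper invokes. The paper itself gives no proof of this lemma --- it quotes \cite[12.10]{Wie4} and remarks that the statement is equivalent to Chunikhin's lemma on \( \pi \)-separable groups \cite[Ch.~5, Theorem~3.7]{Suz} --- so there is nothing to diverge from; your one compressed step, that every \( \pi \)-subgroup \( U \) of \( M \) lies in some complement of \( N \), is the standard part of the Schur--Zassenhaus package (intersect a complement \( C \) with \( UN \) using Dedekind's lemma, then conjugate inside \( UN \)) and is fine to cite as such.
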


We note that Lemma~\ref{Chunikhin} is equivalent to Chunikhin's lemma
on \( \pi \)-separable groups \cite[Ch.~5, Theorem~3.7]{Suz}.

\smallskip

\textbf{Proof of Corollary~\ref{SubmaxProjections}.}
	We use induction on the length \( n \) of a
	series~(\ref{series}). Base \( n = 0 \) is clear. 

	Property~(\ref{SubmaxProperty}) yields \( H \cap G_1 \in \sm_\X(G_1) \)
	and \( K \cap G_1 \in \sm_\X(G_1) \). By induction hypothesis,
	subgroups \( H \cap G_1 \) and \( K \cap G_1 \) are conjugate in
	\( \langle H \cap G_1, K \cap G_1 \rangle \le
	   \langle H, K \rangle \cap G_1 \). Without loss of generality,
	we may assume that \( H \cap G_1 = K \cap G_1 \). Equality
	\( H^1 = K^1 \) implies \( HG_1 = KG_1 \). Set
	\[ G^* = HG_1 = KG_1 \quad \text{ and } \quad
	   T = H \cap G_1 = K \cap G_1. \]
	Then \( H, K \le N_{G^*}(T) \) and \( N_{G^*}(T) = HN_{G_1}(T) \).
	Moreover,
	\[ N_{G_1}(T) = G_1 \cap N_{G^*}(T) \unlhd N_{G^*}(T) =
	   HN_{G_1}(T). \]
	Therefore, \( N_{G^*}(T)/N_{G_1}(T) \) is
	isomorphic to a quotient group of \( H \), so it lies in \( \X \).
	Next, \( T = H \cap G_1 \in \sm_\X(G_1) \), and
	Theorem~\ref{WieHartSubmax} yields that the group \( N_{G_1}(T)/T \) 
	contains no nontrivial \( \X \)-subgroups. Now, \( N_{G^*}(T) \)
	is \( \X \)-separable because it possesses a (sub)normal series
	\[ N_{G^*}(T) \ge N_{G_1}(T) \ge T \ge 1, \]
	where every factor either lies in \( \X \) or contains no
	nontrivial \( \X \)-subgroups. Projections of \( H \)
	and \( K \) on factors of that series are maximal \( \X \)-subgroups
	in respective factors, hence \( H, K \in \m_\X(N_{G^*}(T)) \).
	A subgroup \( J = \langle H, K \rangle \) of \( N_{G^*}(T) \)
	is also \( \X \)-separable, and \( H, K \in \m_\X(J) \).
	By Lemma~\ref{Chunikhin}, \( H \) and \( K \) are conjugate
	in \( J \).

Inclusion~(\ref{inclusion}) and Corollary~\ref{SubmaxProjections} immediately imply

\begin{corollary}\label{MaxProjections}
	Suppose that the group \( G \) possesses a subnormal
	series~{\rm(\ref{series})} and \( H, K \in \m_\X(G) \) are such that
	\[ H^i = K^i \text{ for all } i = 1, \dots, n. \]
	Then \( H \) and \( K \) are conjugate in the subgroup
	\( \langle H, K \rangle \).
\end{corollary}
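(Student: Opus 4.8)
The plan is to deduce this corollary as an immediate specialization of Corollary~\ref{SubmaxProjections}. The key observation is that the hypotheses of the present statement are merely a strengthening of those in Corollary~\ref{SubmaxProjections}: the sole change is that \( H \) and \( K \) are now assumed to lie in \( \m_\X(G) \) rather than in the larger set \( \sm_\X(G) \), while the ambient subnormal series~(\ref{series}) and the conditions \( H^i = K^i \) on the projections are identical in both statements.

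First I would invoke inclusion~(\ref{inclusion}), namely \( \m_\X(G) \subseteq \sm^\circ_\X(G) \subseteq \sm_\X(G) \), to conclude from \( H, K \in \m_\X(G) \) that \( H, K \in \sm_\X(G) \). Since the projection \( H^i = (H \cap G_{i-1})G_i/G_i \) onto the factor \( G^i = G_{i-1}/G_i \) is defined purely in terms of the subgroup \( H \) and the series~(\ref{series}), and does not refer to any maximality or submaximality property of \( H \), the given equalities \( H^i = K^i \) for all \( i = 1, \dots, n \) carry over verbatim as the second hypothesis of Corollary~\ref{SubmaxProjections}.

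With both hypotheses of Corollary~\ref{SubmaxProjections} now verified for \( H \) and \( K \), I would apply that corollary directly to obtain that \( H \) and \( K \) are conjugate in \( \langle H, K \rangle \), which is precisely the assertion to be proved.

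There is essentially no genuine obstacle here: all of the difficulty was absorbed into the proof of Corollary~\ref{SubmaxProjections}, which itself rests on Theorem~\ref{WieHartSubmax} and the inductive property~(\ref{SubmaxProperty}). The only point requiring the slightest attention is to confirm that the subgroup in which conjugacy is claimed, \( \langle H, K \rangle \), coincides in the two statements, which it manifestly does; hence the implication is indeed immediate, as asserted in the text preceding the corollary.
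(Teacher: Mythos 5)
Your proposal is correct and matches the paper exactly: the text derives Corollary~\ref{MaxProjections} as an immediate consequence of inclusion~(\ref{inclusion}) and Corollary~\ref{SubmaxProjections}, which is precisely your argument. Nothing further is needed.
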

The classical Kaloujnine-Krasner theorem \cite{KK} provides a source of groups \( G \) possessing
subgroups \( H \) and \( K \) such that the projections \( H^i \) and \( K^i \) on the sections \( G^i \) of a subnormal  series~(\ref{series}) coincide, while \( H \) and \( K \) are not even isomorphic.
In contrast, Corollaries~\ref{SubmaxProjections} and~\ref{MaxProjections} mean that, up to conjugation, every submaximal
and, in particular, every  maximal \( \X \)-subgroup
is uniquely determined by its projections on the sections of a subnormal series.
In the case where all terms of a series~(\ref{series}) are normal in \( G \),
Corollary~\ref{MaxProjections} is proved in~\cite[Hauptsatz~14.1]{Wie4} and in~\cite[Ch.~5, (3.21) and (3.21)\('\)]{Suz}.
To realize the effectiveness of the notion of submaximal
\( \X \)-subgroups and the power of inductive
property~(\ref{SubmaxProperty}), one can compare the proof of
Corollary~\ref{SubmaxProjections} with the proof of a weaker statement
\cite[Ch.~5, 3.21]{Suz}. Property~(\ref{SubmaxProperty}) and Theorem~\ref{WieHartSubmax} allow us to
use the induction hypothesis straightaway, which makes the reasoning much
simpler and shorter.

\textbf{Proof of Corollary~\ref{DirectProd}.}
	First, let \( H\in \sm_\X(G) \). Property~(\ref{SubmaxProperty}) yields \( H\cap G_i\in \sm_\X(G_i) \)
	for all \( i = 1, \dots, n \). Let \( \rho_i:G\rightarrow G_i \) be the coordinate projection mapping and \( H_i=H^{\rho_i} \).
	Since \( H\cap G_i\unlhd H \) and \( \rho_i \) acts identically on \(G_i\),
	\[ H\cap G_i=(H\cap G_i)^{\rho_i}\unlhd H^{\rho_i}=H_i. \]
	Therefore, \( H_i\leq N_{G_i}(H\cap G_i) \). Theorem~\ref{WieHartSubmax} implies that
	\( N_{G_i}(H\cap G_i)/(H\cap G_i) \) contains no nontrivial \( \X \)-subgroups.
	Consequently, the image of \( H_i \) in \( N_{G_i}(H\cap G_i)/(H\cap G_i) \) is trivial and
	\( H_i=H\cap G_i \).  It follows that
	\[ \langle H\cap G_1,\dots, H\cap G_n\rangle\leq H\leq \langle H_1,\dots, H_n\rangle=\langle H\cap G_1,\dots, H\cap G_n\rangle, \]
	and \( H=\langle H\cap G_1,\dots, H\cap G_n\rangle \), as desired.

	To prove the converse, take arbitrary \( H_i\in\sm_\X(G_i) \) for all \( i=1,\dots,n \). We may assume that for every \( i \) a group
	\( G_i^* \) exists, in which \( G_i \) is subnormal and \( H_i=K_i\cap G_i \) for a suitable \( K_i\in \m_\X(G_i^*) \).
	It is easy to see that
	\[ K=\langle K_1,\dots, K_n\rangle= K_1\times\dots\times K_n\in \m_\X(G^*),\text{ where }G^*=G_1^*\times\dots\times G_n^*. \]
	Moreover, \( H_i=K_i\cap G_i\,{\unlhd\unlhd}\,K_i\unlhd K \), whence \( H_i\,{\unlhd\unlhd}\,K\cap G \) and
	\[ H_i=H_i^{\rho_i}\,{\unlhd\unlhd}\,(K\cap G)^{\rho_i},\]
	where again \( \rho_i:G^*\rightarrow G_i^* \) is the coordinate projection mapping.
	Theorem~\ref{WieHartSubmax} implies that \( N_{(K\cap G)^{\rho_i}}(H_i)=H_i \),
	which is possible only if \( H_i=(K\cap G)^{\rho_i} \). Thus,
	\[ \langle H_i \mid i = 1, \dots, n \rangle \leq K \cap G \leq \langle (K\cap G)^{\rho_i} \mid i = 1, \dots, n \rangle=
	   \langle H_i \mid i = 1, \dots, n \rangle. \]
	In view of \( K\in\m_\X(G^*) \) and \( G{\unlhd\unlhd} G^* \), we obtain
	\[ \langle H_i \mid i = 1, \dots, n \rangle = K \cap G \in \sm_\X(G), \]
	and this completes the proof.
	\qed

\smallskip

\textbf{Proof of Corollary~\ref{Subnormality}.} It suffices to establish the following
\begin{proposition}\label{Subnormality1}
	Let \( A \) be a subgroup of a finite group \( G \). Then the following statements are equivalent.
	\begin{enumerate}[{\normalfont (i)}]
		\item \( A\,{\unlhd\unlhd}\,G \).
		\item \( N_A(H\cap A)/(H\cap A) \) contains no nontrivial \( \X \)-subgroups
			for every complete class \( \X \) and every \( H\in\m_\X(G) \).
		\item \( H\cap A \) is a Sylow \( p \)-subgroup of \( A \)
			for all primes \( p \) and every Sylow \( p \)-subgroup \( H \) of~\( G \).
	\end{enumerate}
\end{proposition}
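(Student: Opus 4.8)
The plan is to prove the three conditions equivalent by the cycle (i) $\Rightarrow$ (ii) $\Rightarrow$ (iii) $\Rightarrow$ (i), with the first two links elementary and the last one importing a deep external theorem.

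The implication (i) $\Rightarrow$ (ii) is immediate: if \( A {\unlhd\unlhd} G \), then for an arbitrary complete class \( \X \) and an arbitrary \( H \in \m_\X(G) \), Theorem~\ref{WieHartSubnorm} (equivalently, the already established Proposition~\ref{refined}) states precisely that \( N_A(H \cap A)/(H \cap A) \) contains no nontrivial \( \X \)-subgroups. As \( \X \) and \( H \) are arbitrary, this is exactly (ii), so nothing beyond the main theorem is needed here.

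For (ii) $\Rightarrow$ (iii) I would specialize \( \X \) to the complete class of all \( p \)-groups, for each prime \( p \) in turn. Then \( \m_\X(G) = \mathrm{Syl}_p(G) \), and ``contains no nontrivial \( \X \)-subgroups'' simply means ``is a \( p' \)-group''; thus (ii) gives that \( N_A(H \cap A)/(H \cap A) \) is a \( p' \)-group for every \( H \in \mathrm{Syl}_p(G) \). It then remains to use the elementary criterion that a \( p \)-subgroup \( Q \) of \( A \) with \( N_A(Q)/Q \) a \( p' \)-group must be a Sylow \( p \)-subgroup of \( A \): otherwise \( Q \) lies properly in some \( P \in \mathrm{Syl}_p(A) \), and the normalizer-grows property in the \( p \)-group \( P \) (namely \( Q < N_P(Q) \)) exhibits a nontrivial \( p \)-subgroup \( N_P(Q)/Q \) of \( N_A(Q)/Q \), a contradiction. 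Applying this with \( Q = H \cap A \) yields \( H \cap A \in \mathrm{Syl}_p(A) \), which is (iii).

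The remaining implication (iii) $\Rightarrow$ (i) is the crux and the place I expect the real difficulty, since it must recover subnormality of \( A \) from the single-prime Sylow-intersection condition alone. I would obtain it by invoking Kleidman's affirmative solution \cite{Kleid} of Wielandt's problem, whose proof relies on CFSG: the condition that \( H \cap A \) is a Sylow \( p \)-subgroup of \( A \) for every prime \( p \) and every \( H \in \mathrm{Syl}_p(G) \) forces \( A {\unlhd\unlhd} G \). It is worth emphasizing that (iii) is formally only the collection of special cases \( \pi = \{ p \} \) of the all-\( \pi \) hypothesis in Wielandt's reformulation of Theorem~\ref{WieHartSubnorm} (the hypothesis captured by letting \( \X \) run over classes of \( \pi \)-groups in (ii)); hence (iii) is a priori the weakest of the three conditions, and the genuine content of this implication---that this weakest condition already suffices for subnormality---is exactly what Kleidman's theorem supplies. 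This closes the cycle and proves the proposition.
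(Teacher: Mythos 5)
Your proposal is correct and follows essentially the same route as the paper's proof: (i)\( \Rightarrow \)(ii) is exactly Theorem~\ref{WieHartSubnorm}, (ii)\( \Rightarrow \)(iii) specializes \( \X \) to the class of \( p \)-groups and uses the standard Sylow argument (which you prove inline via the normalizer-growth property, where the paper simply cites Suzuki), and (iii)\( \Rightarrow \)(i) invokes Kleidman's CFSG-based theorem just as the paper does. There are no gaps.
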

\begin{proof}
	(i)\( \Rightarrow \)(ii) is the statement of Theorem~\ref{WieHartSubnorm}.\\
	(ii)\( \Rightarrow \)(iii). Let \( p \) be a prime and let \( H \) be a Sylow \( p \)-subgroup of~\( G \).
	Applying~(ii) to the situation where \( \X \) is the class of all \( p \)-groups,
	we obtain that \( H\cap A \) is a Sylow \( p \)-subgroup of \( N_A(H\cap A) \).
	Now it follows from the well-known consequence of the Sylow theorem (see, e.g., \cite[Ch.~2, (2.5)]{Suz1})
	that \( H\cap A \) is a Sylow \( p \)-subgroup of~\( A \).\\
	(iii)\( \Rightarrow \)(i) is the main result of~\cite{Kleid}.
\end{proof}

\section{Concluding remarks}\label{Remarks}

Wielandt-Hartley's theorem for normal subgroups (Theorem~\ref{WieHart}) is an invaluable tool for studying subgroup
structure of finite groups. It can be found in Suzuki's classic book \cite[Ch.~5, (3.20) and (3.20)\( ' \)]{Suz}.
Although Wielandt proved this theorem in his lectures \cite{Wie4} delivered at
T\"ubingen in 1963--64, various particular cases of that result were
independently proved by different authors without mentioning Wielandt.
One of the reasons was that the lectures were first
published only in 1994, when the collection of Wielandt's mathematical works
appeared.

The first published proof (1971) of that theorem is by Hartley
\cite[Lemmas~2 and 3]{Hart}. It was obtained for the case when
\( \X \) is a class of \( \pi \)-groups. A similar result was proved
by Shemetkov~\cite{Shem1} in 1972. Both Hartley and Shemetkov used this
version of the theorem as a technical instrument for studying the
well-known \( D_\pi \)-problem: Is it true that the class of groups with
all maximal \( \pi \)-subgroups being conjugate is closed under extensions?
This problem was posed by Wielandt at the XIII International Congress of
Mathematicians in Edinburgh in 1958 \cite{Wie1} and traces back to
P.~Hall's theorem~\cite[Theorem~D5]{Hall}.

Another special case of Theorem~\ref{WieHart} can be obtained by fixing
a nonabelian simple group \( S \) and considering the complete class
\( \X \) of all finite groups with all composition factors having order
less than \( |S| \). Then given an almost simple group \( G \) with socle
\( S \), maximal \( \X \)-subgroups are exactly maximal subgroups of
\( G \) not containing \( S \). Wielandt-Hartley's theorem for
such a group \( G \) implies the following statement:
\textit{If \( G \) is a finite almost simple group with socle \( S \)
and \( M \) is a maximal subgroup of \( G \), then \( S \cap M \ne 1 \).}
It was proved by R.~A.~Wilson in \cite{Wilson} while studying novel
subgroups in almost simple groups (see also \cite[Section~1.3.1]{Bray});
and by M.~W.~Liebeck, C.~E.~Praeger and J.~Saxl in course of the proof
of the O'Nan-Scott theorem for primitive permutation groups
\cite[pp.~395--396]{LPS}.

As mentioned in Introduction, Wielandt-Hartley's theorem for submaximal \( \X \)-subgroups (Theorem~\ref{WieHartSubmax}) was announced
by Wielandt at Santa Cruz conference on finite groups in 1979
\cite[5.4(a)]{Wie3}.
At this meeting, one of the most important in the history of the classification of finite simple groups,
Wielandt gave a talk entitled "Zusammengesetzte Gruppen: H\"older Programm heute."
Concerning the subject, Wielandt anticipated (see \cite[p.~171]{Wie3})
that the theorem about submaximal \( \X \)-subgroups (in the sense of Definition~\ref{Submax} instead of Definition~\ref{StrongSubmax})
would be harder to prove and admitted that this proof had not been already written in details.
The present article provides the proof and explains why Theorem~\ref{WieHartSubmax}
is indeed stronger than Theorem~\ref{WieHartStrongSubmax}: because submaximal \( \X \)-subgroups are not always strongly submaximal.


%
%


\bigskip

\noindent
D. Revin\\
revin@math.nsc.ru\\
S. Skresanov\\
skresan@math.nsc.ru\\
A. Vasil'ev\\
vasand@math.nsc.ru\\
\medskip

\noindent
Sobolev Institute of Mathematics, 4 Acad. Koptyug avenue\\
and\\
Novosibirsk State University, 1 Pirogova street,\\
630090 Novosibirsk, Russia.\\

\end{document}